\newtheorem{Theorem}{Theorem}[section]
\newtheorem{Lemma}[Theorem]{Lemma}
\newtheorem{Corollary}[Theorem]{Corollary}
\newtheorem{Proposition}[Theorem]{Proposition}
\newtheorem{Remark}[Theorem]{Remark}
\begin{document}
\title{On Fixing number of Functigraphs }
\author{Muhammad Fazil, Imran Javaid, Muhammad Murtaza}
\keywords{fixing number, functigraph.\\
2010 {\it Mathematics Subject Classification.} 05C25\\
$^*$ Corresponding author: mfazil@bzu.edu.pk}

\address{Centre for advanced studies in Pure and Applied Mathematics,
Bahauddin Zakariya University Multan, Pakistan
\newline Email:
mfazil@bzu.edu.pk, imran.javaid@bzu.edu.pk, mahru830@gmail.com}

\date{}
\maketitle
\begin{abstract} The fixing number of a graph $G$ is the order of
the smallest subset $S$ of its vertex set $V(G)$ such that
stabilizer of $S$ in $G$, $\Gamma_{S}(G)$ is trivial. Let $G_{1}$
and $G_{2}$ be disjoint copies of a graph $G$, and let
$g:V(G_{1})\rightarrow V(G_{2})$ be a function. A functigraph
$F_{G}$ consists of the vertex set $V(G_{1})\cup V(G_{2})$ and the
edge set $E(G_{1})\cup E(G_{2})\cup \{uv:v=g(u)\}$. In this paper,
we study the behavior of the fixing number in passing from $G$ to
$F_{G}$ and find its sharp lower and upper bounds. We also study the
fixing number of functigraphs of some well known families of graphs
like complete graphs, trees and join graphs.
\end{abstract}

\section{Introduction}
Let $G$ be a connected graph with vertex set $V(G)$ and edge set
$E(G)$. The number of vertices and edges is called the \emph{order}
and the \emph{size} of $G$, respectively. Two vertices $u$ and $v$
of $G$ are called \emph{connected} if there is a path between $u$
and $v$ in $G$. The \emph{distance} between two vertices $u$ and $v$
in $G$, denoted by $d(u,v)$, is the length of a shortest $u-v$ path
if $u$ and $v$ are connected. The \emph{degree} of the vertex $v$ in
$G$, denoted by $deg_{G}(v)$, is the number of edges to which $v$
belongs. The \emph{open neighborhood} of the vertex $u$ of $G$ is
$N(u)=\{v\in V(G):uv\in E(G)\}$ and the \emph{closed neighborhood}
of $u$ is $N(u)\cup \{u\}$. Two vertices $u,v$ are \emph{adjacent
twins} if $N[u]=N[v]$ and \emph{non adjacent twins} if $N(u)=N(v)$.
If $u,v$ are adjacent or non adjacent twins, then $u,v$ are
\emph{twins}. A set of vertices is called \emph{twin-set} if every
of its two vertices are twins.

An \textit{automorphism} $\alpha$ of $G$, $\alpha: V(G)\rightarrow
V(G),$ is a bijective mapping such that $\alpha(u)\alpha(v)\in E(G)
$ if and only if $uv \in E(G).$ Thus, each automorphism $\alpha$ of
$G$ is a \emph{permutation} of the vertex set $V(G)$ which preserves
adjacencies and non-adjacencies. The \textit{automorphism group} of
a graph $G$, denoted by $\Gamma(G)$, is the set of all automorphisms
of a graph $G$. The \emph{stabilizer} of $v$, denoted by
$\Gamma_{v}(G)$, is the set $\{\alpha\in \Gamma(G) : v=
\alpha(v)\}$. The stabilizer of a set of vertices $S\subseteq V (G)$
is $\Gamma_{S}(G) = \{\alpha\in \Gamma(G) : v= \alpha(v) \,\ \forall
\,\ v\in S\}$. Note that $\Gamma_{S}(G)=\bigcap_{v\in S}
\Gamma_{v}(G)$. The \emph{orbit} of a vertex $v$, denoted by
$\theta(v)$, is the set $\{u \in V (G) : u= \alpha(v)\,\ \mbox{for
some}\,\ \alpha\in \Gamma(G)\}$. Two vertices $u$ and $v$ are
\emph{similar}, denoted by $u\sim v$ if they belong to the same
orbit.

A vertex $v$ is \emph{fixed} by a group element $\alpha\in
\Gamma(G)$ if $\alpha\in \Gamma_{v}(G)$. A set of vertices
$S\subseteq V(G)$ is a \emph{fixing set} of $G$ if $\Gamma_{S}(G)$
is trivial. In this case, we say that $S$ fixes $G$. The
\emph{fixing number} of a graph $G$, denoted by $fix(G)$, is the
cardinality of a smallest fixing set of $G$ \cite{erw}. The graphs
with $fix(G) = 0$ are called the \emph{rigid graphs} \cite{alb},
which have trivial automorphism group. Every graph $G$ has a fixing
set. Trivially, the set of vertices of $G$ itself is a fixing set.
It is also clear that any set containing all but one vertex is a
fixing set. Thus, for a graph $G$ on $n$ vertices $0\leq fix(G)\leq
n-1$.

The fixing number of graph $G$ was first defined by Erwin and Harary
in 2006 \cite{erw}. Boutin independently, did her research on the
fixing number and name this parameter, the \emph{determining
number}. A considerable literature has been developed in this field
(see \cite{cac, gib, har}).

Let $G_{1}$ and $G_{2}$ be disjoint copies of a connected graph $G$,
and let $g:V(G_{1})\rightarrow V(G_{2})$ be a function. A
\emph{functigraph} $F_{G}$ of graph $G$ consists of the vertex set
$V(G_{1})\cup V(G_{2})$ and the edge set $E(G_{1})\cup E(G_{2})\cup
\{uv:v=g(u)\}$.

The idea of permutation graph was introduced by Chartrand and Harary
\cite{char} for the first time. Dorfler \cite{dor}, introduced a
mapping graph which consists of two disjoint identical copies of
graph where the edges between the two vertex sets are specified by a
function. The mapping graph was rediscovered and studied by Chen et
al. \cite{yi}, where it was called the functigraph. Unless otherwise
specified, all the graphs $G$ considered in this paper are simple,
non-trivial and connected. Throughout the paper, we will denote the
functigraph of $G$ by $F_{G}$, $V(G_{1})=A$, $V(G_{2})=B$,
$g(V(G_{1}))= I$, $|g(V(G_{1}))|= |I|= s$ and the minimum fixing set
of $F_{G}$ by $S^{\ast}$.

This paper organized as follows. Section 2 provides the study of
fixing number of functigraphs. We give sharp lower and upper bounds
for fixing number of functigraph. This section also establishes the
connections between the fixing number of graphs and their
corresponding functigraphs in the form of realizable results.
Section 3 provides the fixing number of functigraphs of some well
known families of graphs likes complete graphs, trees and join
graphs. Some useful results related to these families are also part
of this section.

\section{Some basic results and bounds}
We recall some elementary results about the fixing number which are
useful for onward discussion.

%

\begin{Proposition}\label{fazf}\cite{faz} Suppose that $u$, $v$ are twins in a connected
graph $G$ and $S$ is a fixing set of $G$, then either $u$ or $v$ is
in $S$. Moreover, if $u\in S$ and $v\not\in S$, then
$(S-\{u\})\cup\{v\}$ is a fixing set of $G$.
\end{Proposition}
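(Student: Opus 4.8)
The plan is to reduce both assertions to one structural fact: whenever $u$ and $v$ are twins, the transposition $\tau$ that interchanges $u$ and $v$ and fixes every other vertex is a non-trivial automorphism of $G$. First I would establish this fact directly from the definition, checking that $\tau$ preserves adjacency and non-adjacency and splitting into the two cases of twins. For an edge $xy$ with $\{x,y\}\cap\{u,v\}=\emptyset$ nothing moves, so the only edges to examine are those incident to $u$ or $v$; here the equalities $N[u]=N[v]$ (adjacent twins) or $N(u)=N(v)$ (non-adjacent twins) are precisely what is needed to show that $xy\in E(G)$ if and only if $\tau(x)\tau(y)\in E(G)$. Since $\tau$ moves $u$ and $v$, it is a non-identity element of $\Gamma(G)$.

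Granting this, the first assertion follows by contraposition. If neither $u$ nor $v$ lies in $S$, then $\tau$ fixes every vertex of $S$ pointwise, so $\tau\in\Gamma_{S}(G)$; as $\tau\neq\mathrm{id}$, this contradicts the triviality of $\Gamma_{S}(G)$. Hence at least one of $u,v$ belongs to $S$.

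For the second assertion, suppose $u\in S$ and $v\notin S$, and set $S'=(S-\{u\})\cup\{v\}$. The cleanest route is to observe that $\tau(S)=S'$: indeed $\tau$ sends $u$ to $v$ and fixes every remaining element of $S$, since each such element differs from both $u$ and $v$ (using $v\notin S$). I would then invoke the standard identity $\Gamma_{\tau(S)}(G)=\tau\,\Gamma_{S}(G)\,\tau^{-1}$, whose verification is a one-line computation: an automorphism $\alpha$ fixes $\tau(S)$ pointwise if and only if $\tau^{-1}\alpha\tau$ fixes $S$ pointwise. Because $\Gamma_{S}(G)$ is trivial, its conjugate $\Gamma_{S'}(G)=\tau\,\Gamma_{S}(G)\,\tau^{-1}$ is trivial as well, so $S'$ is a fixing set. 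Alternatively, one can argue directly: given $\beta\in\Gamma_{S'}(G)$, the automorphism $\tau\beta\tau$ fixes $S$ pointwise, hence is the identity, which forces $\beta=\mathrm{id}$.

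The only genuinely delicate point is the opening claim that the twin-transposition is an automorphism; everything afterward is formal. Within that claim the case analysis must correctly track whether $uv\in E(G)$ (an edge exactly for adjacent twins), since this governs how the edge $uv$ itself behaves under $\tau$. I expect this bookkeeping, rather than any part of the stabilizer argument, to be the main thing to get right.
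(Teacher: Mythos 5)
Your proof is correct and complete: the verification that the twin transposition $\tau=(u\,v)$ is an automorphism, the contrapositive argument for the first assertion, and the conjugation identity $\Gamma_{\tau(S)}(G)=\tau\,\Gamma_{S}(G)\,\tau^{-1}$ for the second are all sound, including the careful handling of the edge $uv$ itself. Note that the paper offers no proof of this statement to compare against --- it is imported from the cited reference \cite{faz} --- but your argument is the standard one for this fact, and nothing is missing.
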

\begin{Proposition}\label{rem3.3}\cite{faz}
Let $U$ be a twin-set of order $m\geq 2$ in a connected graph $G$,
then every fixing set $S$ of $G$ contains at least $m-1$ vertices of
$U$.
\end{Proposition}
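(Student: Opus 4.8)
The plan is to argue by contradiction and reduce the assertion about the whole twin-set $U$ to the pairwise statement already recorded in Proposition \ref{fazf}. Suppose, to the contrary, that some fixing set $S$ of $G$ contains at most $m-2$ vertices of $U$. Then $|U\setminus S|\geq 2$, so I may choose two distinct vertices $u,v\in U$ with $u\notin S$ and $v\notin S$. By the definition of a twin-set, every two of its vertices are twins; in particular $u$ and $v$ are twins. I would then apply Proposition \ref{fazf} to the pair $u,v$ and the fixing set $S$, which asserts that either $u\in S$ or $v\in S$. This contradicts the choice $u,v\notin S$, and the contradiction forces $|U\setminus S|\leq 1$, i.e. $S$ contains at least $m-1$ vertices of $U$.

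The one point deserving care is the passage from ``all pairs in $U$ are twins'' to the applicability of Proposition \ref{fazf}: no transitivity of the twin relation is needed, since the definition of a twin-set builds in that every pair is a twin pair, so any two omitted vertices $u,v$ form a legitimate twin pair to which \ref{fazf} applies directly. For completeness I would also recall the mechanism underlying \ref{fazf}, which gives a self-contained argument should one prefer not to cite it: if $u,v$ are twins then the transposition $\tau$ swapping $u$ and $v$ and fixing every other vertex is an automorphism of $G$. One checks this by cases on the edge $\{x,y\}$, using $N[u]=N[v]$ in the adjacent-twin case and $N(u)=N(v)$ in the non-adjacent-twin case, to see that $\tau$ preserves adjacency and non-adjacency. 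If both $u,v$ lay outside $S$, then $\tau$ would fix $S$ pointwise while being non-trivial, so $\tau\in\Gamma_S(G)$ would be a non-identity element, contradicting that $S$ is a fixing set.

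I do not anticipate a genuine obstacle, as the result is essentially the global form of the pairwise Proposition \ref{fazf}; the only mildly computational step is the verification that the transposition $\tau$ is an automorphism, and even that is immediate once the neighborhood equalities defining twins are unwound. The conceptual content to keep straight is simply that any fixing set can spare at most one vertex of a twin-set, which is exactly what the bound $m-1$ records.
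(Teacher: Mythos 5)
Your proof is correct. Note that the paper itself offers no proof of this statement: Proposition~\ref{rem3.3} is quoted from the reference \cite{faz} (a submitted paper of Javaid, Fazil, Ali and Salman), so there is no in-paper argument to compare against. Your reduction is exactly the natural one: if a fixing set $S$ omitted two vertices $u,v$ of the twin-set $U$, then $u,v$ are twins (by the very definition of a twin-set, so no transitivity issue arises), and Proposition~\ref{fazf} forces one of them into $S$, a contradiction; hence $|U\setminus S|\leq 1$. Your self-contained backup argument is also sound and is the mechanism underlying both propositions: the transposition exchanging two twins and fixing everything else preserves adjacency (using $N[u]=N[v]$ in the adjacent case and $N(u)=N(v)$ in the non-adjacent case), so if both twins avoided $S$ this transposition would be a non-identity element of $\Gamma_{S}(G)$, contradicting that $S$ is a fixing set.
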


An ordered set $W = \{w_{1},w_{2},...,w_{k}\}\subseteq V(G)$ is
called a \emph{resolving set} for $G$ if for every two distinct
vertices $u$ and $v$ of $G$, $(d(u,w_1),\ldots,d(u,w_k))\neq
(d(v,w_1),\ldots,d(v,w_k))$. The \emph{metric dimension}, denoted by
$\beta(G)$, is the cardinality of a minimum resolving set of $G$
\cite{har1}.

\begin{Proposition}\label{f3} \cite{bou, erw} If $S\subseteq V(G)$
is a resolving set of $G$, then $S$ is a fixing set of G. In
particular, $fix(G)\leq \beta(G)$.
\end{Proposition}

%

\begin{Theorem} \label{f4}\cite{kang} Let $G$ be a connected graph
of order $n\geq 3$, and let $g:A\rightarrow B$ be a function, then
$2\leq \beta(F_{G})\leq 2n-3$. Both bounds are sharp.
\end{Theorem}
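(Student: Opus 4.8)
The plan is to handle the two inequalities separately and then exhibit extremal functigraphs for each; throughout I record first that $F_G$ is connected, since $G_1$ and $G_2$ are connected and the $n$ function edges $\{u\,g(u)\}$ join $A$ to $B$, so all distances in $F_G$ are finite and $\beta(F_G)$ is well defined.

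For the lower bound I would invoke the classical fact that a connected graph $H$ has $\beta(H)=1$ if and only if $H$ is a path. Since $G$ is connected with $n\ge 3$, the sum of degrees in $G_1$ is at least $2(n-1)>n$, so $G_1$ contains a vertex $w$ with $\deg_{G_1}(w)\ge 2$. In $F_G$ the vertex $w\in A$ additionally carries the function edge $w\,g(w)$, hence $\deg_{F_G}(w)\ge 3$. A path has maximum degree $2$, so $F_G$ is not a path and therefore $\beta(F_G)\ge 2$.

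For the upper bound I would first record the elementary reformulation: for any connected graph $H$ of order $N$ and any $T\subseteq V(H)$, the set $V(H)\setminus T$ is a resolving set if and only if the vertices of $T$ have pairwise distinct distance vectors with respect to $V(H)\setminus T$. Indeed, every vertex outside $T$ is the unique vertex at distance $0$ from itself, so the only pairs $V(H)\setminus T$ can fail to resolve lie inside $T$. Taking $N=2n$, it suffices to produce three vertices $x,y,z$ of $F_G$ so that each pair among them is resolved by some vertex of $V(F_G)\setminus\{x,y,z\}$; that complement is then a resolving set of size $2n-3$. To build such a triple I would split on whether $G$ is complete. If $G\ne K_n$, pick $p,q\in V(G)$ with $d_G(p,q)\ge 2$ and form the triple from the copies of $p,q$ in $A$ (or $B$) together with a third vertex, reading off distinct distance vectors along a shortest $p$--$q$ path, which survives inside $A$ (respectively $B$). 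If $G=K_n$, both copies are cliques and every within-copy distance equals $1$, so all resolution must come from the $A/B$ dichotomy and from $g$: one then selects the triple among vertices distinguished by their adjacency to $A$ (the image $I=g(A)$) and by the sizes of the preimages $g^{-1}(\cdot)$. The case $G=K_n$, where within-copy distances carry no information and everything rests on $g$, is the main obstacle and the place where $n\ge 3$ is essential; when the twin classes are large, the resolving-set analogues of Propositions \ref{fazf} and \ref{rem3.3} keep the bookkeeping under control.

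Finally I would verify sharpness. For the lower bound take $G=P_n$ with the identity map $g(a_i)=b_i$; then $F_G$ is the ladder $P_n\,\square\,K_2$ (equivalently the $2\times n$ grid $P_2\,\square\,P_n$), which is not a path but is resolved by two suitably chosen corner vertices, so $\beta(F_G)=2$. For the upper bound take $G=K_n$ with the constant map $g\equiv b_1$. Then the $n$ vertices of $A$ form one twin-set (each with closed neighborhood $A\cup\{b_1\}$) and the $n-1$ vertices $b_2,\dots,b_n$ form another (each with closed neighborhood $B$), while $b_1$ is the unique universal vertex. By the twin bound for resolving sets (the analogue of Proposition \ref{rem3.3}) any resolving set omits at most one vertex of each twin-set, forcing size at least $(n-1)+(n-2)=2n-3$; a direct check shows the complement of $\{a_n,b_1,b_n\}$ resolves $F_G$, so $\beta(F_G)=2n-3$. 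This gives both bounds and their sharpness.
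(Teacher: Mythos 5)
First, a point of comparison: the paper does not prove this statement at all --- Theorem \ref{f4} is imported from \cite{kang} and used as a black box (it feeds directly into Proposition \ref{f5}). So your proposal can only be measured on its own merits. On those merits, parts of it are solid: the lower bound argument is correct and complete ($F_G$ is connected, has a vertex of degree at least $3$, namely a vertex of degree $\geq 2$ in $G_1$ together with its function edge, hence $F_G$ is not a path, and $\beta(H)=1$ exactly for paths); and both sharpness examples check out (the $2\times n$ grid has metric dimension $2$; for $K_n$ with constant $g$ the two twin-sets $A$, with closed neighborhood $A\cup\{b_1\}$, and $\{b_2,\dots,b_n\}$, with closed neighborhood $B$, force $\beta\geq (n-1)+(n-2)=2n-3$, matched by your explicit resolving set).

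The genuine gap is the general upper bound $\beta(F_G)\leq 2n-3$ for arbitrary $G$ and $g$, which is the real content of the theorem (note it even fails for $n=2$). Your reduction --- find three vertices such that every pair among them is resolved by a vertex outside the triple --- is a correct reformulation, but your construction of the triple does not work as described. In the case $G\neq K_n$, the recipe ``pick $p,q$ with $d_G(p,q)\geq 2$'' can fail outright: if $p,q$ are non-adjacent twins of $G$ (say two leaves of a star) and $g(p)=g(q)$, then their copies in $A$ are twins in $F_G$, so \emph{no} vertex resolves that pair and no choice of third vertex repairs the triple; any proof must argue that a twin-free choice exists or route around it. Moreover, distances inside $A$ are not the distances of $G_1$, since paths may shortcut through $B$ via the function edges, so ``reading off distance vectors along a shortest $p$--$q$ path inside $A$'' is not a valid computation. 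Finally, the case $G=K_n$, which you yourself identify as the main obstacle, is not argued at all: invoking resolving-set analogues of Propositions \ref{fazf} and \ref{rem3.3} cannot close it, because those give \emph{lower} bounds on resolving sets, whereas here you need an upper bound, i.e., an explicit triple and a verification that each of its pairs is resolved from outside. As it stands, the proposal establishes $2\leq\beta(F_G)$ and the sharpness of both bounds, but not the inequality $\beta(F_G)\leq 2n-3$ itself.
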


Next, the sharp lower and upper bounds on the fixing number of
functigraphs are given in the following result:

\begin{Proposition} \label{f5}Let $G$ be a connected graph of order $n\geq 3$, and
let $g:A\rightarrow B$ be a function, then $0\leq fix(F_{G})\leq
2n-3$. Both bounds are sharp.
\end{Proposition}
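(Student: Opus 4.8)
The plan is to prove the lower bound, the upper bound, and the sharpness of each separately.

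**Lower bound.** The plan is to establish $fix(F_G) \geq 0$, which is immediate since the fixing number of any graph is nonnegative by definition. For sharpness, I would exhibit a connected graph $G$ of order $n \geq 3$ together with a function $g$ such that $F_G$ is rigid, i.e. $\Gamma(F_G)$ is trivial. The natural strategy is to choose $G$ and $g$ so that the two copies $G_1$ and $G_2$ are forced to be distinguishable by every automorphism: for instance, taking $g$ to be a constant function, or a function whose image $I$ is a single vertex, breaks the symmetry between $A$ and $B$, and one can often arrange the remaining structure so that no nontrivial automorphism survives. I would verify directly that the constructed $F_G$ has no nontrivial automorphism.

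**Upper bound.** For $fix(F_G) \leq 2n-3$, I would invoke Proposition~\ref{f3}, which gives $fix(F_G) \leq \beta(F_G)$, together with Theorem~\ref{f4}, which states $\beta(F_G) \leq 2n-3$. Chaining these two inequalities yields $fix(F_G) \leq \beta(F_G) \leq 2n-3$ immediately. This is the cleanest route and avoids any direct combinatorial argument about fixing sets of $F_G$.

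**Sharpness of the upper bound.** This is the step I expect to be the main obstacle. I would need a connected graph $G$ of order $n$ and a function $g$ for which $fix(F_G) = 2n-3$ exactly; showing $fix(F_G) \geq 2n-3$ requires a lower-bound argument on the fixing number, not merely an upper bound. The natural candidate is a functigraph containing a large twin-set, since by Proposition~\ref{rem3.3} a twin-set of order $m$ forces at least $m-1$ of its vertices into every fixing set. Concretely, I would look for a choice of $G$ (plausibly a complete graph $K_n$) and a function $g$ under which $F_G$ decomposes its $2n$ vertices into twin-classes whose sizes force a total of at least $2n-3$ vertices across all classes via repeated application of Proposition~\ref{rem3.3}. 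The delicate part is checking that the twin-set structure of $F_G$ is exactly as needed and that no smaller fixing set exists, so that the bound $2n-3$ is attained rather than merely approached.
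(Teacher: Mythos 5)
Your lower bound, upper bound, and upper-bound sharpness all track the paper's own proof. The bound $0\leq fix(F_{G})$ is definitional; the upper bound is obtained exactly as in the paper, by chaining Proposition \ref{f3} with Theorem \ref{f4}; and your candidate for attaining $2n-3$ (a complete graph with a function producing large twin-sets) is precisely the paper's example, $G=K_{n}$ with $g$ constant: there $A$ is a twin-set of order $n$ and $B\setminus I$ is a twin-set of order $n-1$, so Proposition \ref{rem3.3} forces at least $(n-1)+(n-2)=2n-3$ vertices into every fixing set, and the upper bound gives equality. In fact you supply the twin-set justification that the paper omits (it merely asserts $fix(F_{K_{n}})=2n-3$), so that portion of your plan is sound and completable.

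The genuine gap is the sharpness of the lower bound. You never exhibit and verify a pair $(G,g)$ with $fix(F_{G})=0$, and the heuristic you propose --- take $g$ constant --- points in the wrong direction. If $g$ is constant with image $\{v_{1}\}$, then every automorphism $\alpha$ of $G_{1}$ extends to an automorphism of $F_{G}$ acting as $\alpha$ on $A$ and as the identity on $B$ (well defined precisely because $g(\alpha(u))=v_{1}=g(u)$ for all $u$), so $\Gamma(F_{G})$ contains an isomorphic copy of $\Gamma(G)$. Hence a constant $g$ can yield a rigid functigraph only when $G$ itself is rigid, which is impossible for $3\leq n\leq 5$ (every graph on at most five vertices other than $K_{1}$ has a nontrivial automorphism) and makes the example trivial otherwise; indeed, constant $g$ on $K_{n}$ is exactly the paper's example for attaining the \emph{maximum} $2n-3$, the opposite extreme. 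The missing idea is a concrete non-constant construction: the paper takes $G=P_{3}$ with $g(u_{1})=g(u_{2})=v_{1}$, $g(u_{3})=v_{3}$ and claims $\Gamma(F_{G})$ is trivial. Be warned that this step is genuinely delicate: rigidity depends on exactly which vertices are identified under $g$. With the consecutive labelling $u_{1}u_{2}u_{3}$ of the path, even the paper's own example admits the nontrivial automorphism $(u_{2}\,v_{1})(u_{3}\,v_{2})$, since $F_{G}$ is then a $6$-cycle with a single chord; the construction only becomes rigid under the labelling in which the doubled images attach at the centre. So your plan needs, at minimum, an explicit $(G,g)$ together with a direct check that no nontrivial automorphism survives --- this verification is the one substantive piece the proposal leaves unresolved.
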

\begin{proof}Obviously, $0\leq
fix(F_{G})$ by definition. The upper bound follows from Proposition
\ref{f3} and Theorem \ref{f4}. Hence $0\leq fix(F_{G})\leq 2n-3$.
For the sharpness of the lower bound, take $G=P_{3}$ and
$g:A\rightarrow B$, be a function such that $v_{1}= g(u_{i}), i=1,2$
and $v_{3}= g(u_{3})$. For the sharpness of the upper bound, take
$G=K_{n}$, the complete graph of order $n\geq 3$, and let
$g:A\rightarrow B$ be defined by $v_{1}= g(u_{i})$ for each $i,$
where $1\leq i\leq n$. Hence $fix(F_{G})=2n-3$ and the proof is
complete.
\end{proof}

A connected graph $G$ is called \emph{symmetric} if $fix(G)\neq 0.$
By using the Proposition \ref{f5}, we have the following result:
\begin{Proposition} Let $G$ be a symmetric connected graph,
then $1\leq fix(G)+ fix(F_{G})\leq 3n-4.$ Both bounds are sharp.
\end{Proposition}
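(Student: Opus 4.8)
The plan is to obtain both inequalities by assembling the single-variable bounds for $fix(G)$ and $fix(F_G)$ that are already available, and then to pin down tightness with explicit extremal graphs.

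First I would dispatch the lower bound. Since $G$ is symmetric, by definition $fix(G)\neq 0$, hence $fix(G)\geq 1$; and $fix(F_G)\geq 0$ holds trivially for every functigraph (indeed this is the left inequality of Proposition \ref{f5}). Adding the two gives $fix(G)+fix(F_G)\geq 1$. For sharpness I would reuse the configuration from the proof of Proposition \ref{f5}: take $G=P_3$, which is symmetric with $fix(P_3)=1$ (its only nontrivial automorphism interchanges the two leaves), together with the function $g$ defined there, for which $fix(F_G)=0$. Then $fix(G)+fix(F_G)=1$, so the lower bound is attained.

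Next, for the upper bound I would invoke the two extreme estimates in tandem: the elementary bound $fix(G)\leq n-1$ valid for any connected graph on $n$ vertices (noted in the introduction), and $fix(F_G)\leq 2n-3$ from Proposition \ref{f5}. Summing yields $fix(G)+fix(F_G)\leq (n-1)+(2n-3)=3n-4$. For sharpness the natural candidate is $G=K_n$: all of its vertices are mutually twins, so by Proposition \ref{rem3.3} every fixing set must contain at least $n-1$ of them, forcing $fix(K_n)=n-1$; and with the function $g$ of Proposition \ref{f5} (namely $v_1=g(u_i)$ for all $i$) one has $fix(F_{K_n})=2n-3$. Hence the sum equals $3n-4$, and the upper bound is attained. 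Note that $K_n$ is symmetric, so the hypothesis of the proposition is met.

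The only step deserving a little care is the sharpness of the upper bound. Because $fix(G)$ does not depend on the choice of $g$ while $fix(F_G)$ does, I must check that one and the same graph $G=K_n$ simultaneously realizes $fix(G)=n-1$ and admits a function $g$ achieving $fix(F_G)=2n-3$. This is exactly what the two cited facts together guarantee, so no real obstacle arises; the whole argument reduces to combining the previously established bounds and verifying the two boundary examples.
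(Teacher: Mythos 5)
Correct, and essentially the same route as the paper: the paper obtains this proposition directly from Proposition \ref{f5} together with the trivial bounds $1\leq fix(G)\leq n-1$ for a symmetric graph, which is exactly your argument, with the same extremal examples ($P_{3}$ for the lower bound, $K_{n}$ with the constant-type $g$ for the upper). One caveat, inherited from the paper rather than introduced by you: the function given in Proposition \ref{f5} for $P_{3}$ (namely $g(u_{1})=g(u_{2})=v_{1}$, $g(u_{3})=v_{3}$) does not actually make $F_{G}$ rigid, since the map fixing $u_{1}$ and $v_{3}$ while swapping $u_{2}\leftrightarrow v_{1}$ and $u_{3}\leftrightarrow v_{2}$ is a nontrivial automorphism; taking instead $g(u_{3})=v_{2}$ gives a rigid functigraph, so the sharpness claim itself still stands.
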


\begin{figure}[h]
        \centerline
        {\includegraphics[width=4cm]{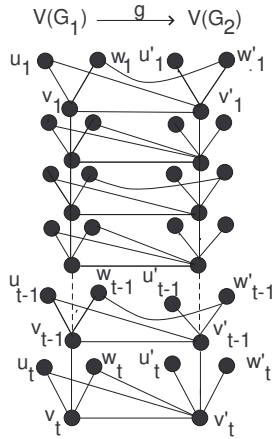}}
        \label{Fix3}
        \caption{The graph with $fix(G)= t = fix(F_{G}).$}\label{f1}
\end{figure}

\begin{Lemma}\label{fff} For any integer $t\geq 2$,
there exists a connected graph $G$ and a function $g$ such that
$fix(G)=t=fix(F_{G})$.
\end{Lemma}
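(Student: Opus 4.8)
The plan is to exhibit, for each $t \ge 2$, a single explicit pair $(G,g)$ and to compute both fixing numbers directly. I would take $G = K_{1,t+1}$, the star with center $c$ and leaves $\ell_1,\dots,\ell_{t+1}$. That $fix(G)=t$ is immediate: the leaves form a twin-set of order $t+1$, so Proposition \ref{rem3.3} forces any fixing set to contain at least $t$ of them, while fixing any $t$ leaves leaves only the identity in $\Gamma(G)$ (the last leaf being then determined). For the function I would take $g$ to be the canonical isomorphism between the two copies, i.e.\ writing $A=\{c,u_1,\dots,u_{t+1}\}$ and $B=\{c',v_1,\dots,v_{t+1}\}$, set $g(c)=c'$ and $g(u_i)=v_i$. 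Then $F_G$ is the prism over the star: each $u_i$ has neighborhood $\{c,v_i\}$, each $v_i$ has neighborhood $\{c',u_i\}$, and $c,c'$ are joined to their respective leaves and to each other.

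The first real step is to pin down $\Gamma(F_G)$. A degree count shows $c$ and $c'$ are the only vertices of degree $t+2$, all other vertices having degree $2$; hence every automorphism either fixes or interchanges $c$ and $c'$ and permutes the set $\{u_i\}\cup\{v_j\}$ within itself. If an automorphism fixes $c$ and $c'$, then it permutes the degree-$2$ neighbors $\{u_i\}$ of $c$ by some $\pi$ and the $\{v_j\}$ of $c'$ by some $\rho$; the surviving edge $u_iv_i$ forces $\rho=\pi$, so the automorphism is the \emph{diagonal} permutation $u_i\mapsto u_{\pi(i)},\ v_i\mapsto v_{\pi(i)}$. If instead it swaps $c$ and $c'$, a parallel argument shows it has the form $u_i\mapsto v_{\sigma(i)},\ v_i\mapsto u_{\sigma(i)}$. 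Consequently $\Gamma(F_G)\cong S_{t+1}\times\mathbb{Z}_2$, generated by the diagonal copy of $S_{t+1}$ and the copy-swap.

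For the lower bound I would use the $t+1$ \emph{columns} $C_i=\{u_i,v_i\}$, which the diagonal $S_{t+1}$ permutes as blocks. If a candidate fixing set $S$ avoided two whole columns $C_i,C_j$, then the diagonal transposition interchanging $C_i$ and $C_j$ would fix every vertex of $S$, so $S$ could not be a fixing set; hence $S$ meets at least $t$ columns and $fix(F_G)\ge t$. For the matching upper bound I would verify that $\{u_1,\dots,u_t\}$ fixes $F_G$: an automorphism fixing $u_1$ cannot be of swap type (it would send $u_1$ to a $v$-vertex), so it is a diagonal $\pi$; fixing $u_1,\dots,u_t$ forces $\pi$ to fix $t$ of its $t+1$ symbols, whence $\pi$ is the identity. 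Therefore $fix(F_G)=t=fix(G)$.

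I expect the main obstacle to be the second paragraph---showing that $F_G$ has no \emph{unexpected} automorphisms beyond the diagonal $S_{t+1}$ and the copy-swap. The degree argument cleanly separates the two centers from the leaves, and the leaf-to-leaf matching edges $u_iv_i$ are what rigidly tie the two copies together and rule out independent permutations of the $u$'s and $v$'s; making this rigidity airtight is the crux, after which both bounds follow from elementary block and transposition arguments.
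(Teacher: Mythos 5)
Your proof is correct, and it takes a genuinely different route from the paper's. You use the star $G=K_{1,t+1}$ with $g$ the canonical bijection between the two copies, and you determine the automorphism group of the resulting functigraph completely: $\Gamma(F_G)\cong S_{t+1}\times\mathbb{Z}_2$ (diagonal permutations of the matched leaf pairs, plus the copy swap), after which $fix(F_G)\ge t$ follows from your column-block argument and $fix(F_G)\le t$ from fixing $t$ of the $u_i$. The paper instead builds a caterpillar: a path $v_1v_2\cdots v_t$ with a pair of pendant twins $u_i,w_i$ attached at each $v_i$, so that $fix(G)=t$ is forced by the $t$ twin pairs; the function $g$ is then chosen (with separate definitions for $t$ even and $t$ odd) so that exactly $t$ twin sets survive in $F_G$ --- half in $A$, half in $B$ --- and the lower bound comes from Proposition \ref{rem3.3} applied to those twin sets rather than from any explicit knowledge of $\Gamma(F_G)$. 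Your construction buys uniformity (no parity case-split), much smaller graphs ($t+2$ vertices rather than $3t$), and a complete, explicit group computation, which the paper never carries out; the paper's construction buys reuse, since the same caterpillar template with modified functions $g$ drives Lemma \ref{fffw} and the realization lemmas that follow it, so the heavier setup pays off downstream. The one step you defer to ``a parallel argument'' --- that in the swap case the matching edges $u_iv_i$ force the same permutation on both copies --- is indeed the crux of the rigidity claim, and it does go through exactly as in your diagonal case.
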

\begin{proof}Construct the graph $G$ as follows: Let $P_{t}:
v_{1}v_{2}...v_{t}$ be a path. Join two pendant vertices $u_{i},
w_{i}$ with each $v_{i}$, where $1\leq i \leq t$. This completes
construction of $G$. Note that one of the vertex from each pair of
pendant vertices belongs to a minimum fixing set of $G$ and hence
$fix(G)=t$. Now, we label the corresponding vertices of $B$ as
$v_{i}', u_{i}', w_{i}'$ for all $i$, where $1\leq i \leq t$ and
construct a functigraph $F_{G}$ as follows: If $t$ is even, then
define $g:A\rightarrow B$ as $v_{i}'= g(v_{i})$, for all $i$, where
$1\leq i \leq t;$ $v_{i}'= g(u_{i}), w_{i}'= g(w_{i})$, for all $i=
2k+1$, where $0\leq k \leq \frac{t}{2}-1;$ and $v_{i}'= g(u_{i})=
g(w_{i})$, for all $i= 2k$, where $1\leq k \leq \frac{t}{2}$ as
shown in the Figure \ref{f1}. Now, consider the set $S^{\ast}=
\{u_{i}, u_{i}';\,\ i=2k,\,\ \mbox{where}\,\ 1\leq k \leq
\frac{t}{2}\}$. Note that $\Gamma_{S^{\ast}}(F_{G})$ is trivial and
hence $S^{\ast}$ is a fixing set of $F_{G}.$ Thus, $fix(F_{G})\leq
t.$ Moreover, $N(u_{i})= N(w_{i})$ and $N(u_{i}')= N(w_{i}')$, for
all $i= 2k$, where $1\leq k \leq \frac{t}{2}$. Thus, we have twin
sets $\{u_i,w_i\}$, $\{u'_i,w'_i\}$ for all $i=2k$, where $1\le k
\le \frac {t}{2}$. By Proposition \ref{rem3.3}, at least one element
from these $t$ twin sets must belongs to every fixing set of
$F_{G}$. This implies that $fix(F_{G})\geq t$. Hence, $fix(F_{G})=
t$. If $t$ is odd, then we define $g:A\rightarrow B$ by $v_{i}'=
g(v_{i})$, for all $i$, where $1\leq i \leq t;$ $v_{i}'= g(u_{i}),
w_{i}'= g(w_{i})$, for all $i= 2k+1$, $0\leq k \leq
\lfloor\frac{t}{2}\rfloor-1;$ $v_{i}'= g(u_{i})= g(w_{i})$, for all
$i= 2k$, $1\leq k \leq \lfloor\frac{t}{2}\rfloor;$ and $v_{t-2}'=
g(u_{t}),$ $w_{t-2}'= g(w_{t}).$ Use same steps as for case when $t$
is even and choosing $S^{\ast}= \{u_{i}, u_{i}';\,\ i=2k,\,\
\mbox{where}\,\ 1\leq k \leq \lfloor\frac{t}{2}\rfloor\}$, we note
that $fix(F_{G})= t$. Hence $fix(G)=t=fix(F_{G}).$
\end{proof}

Let us now discuss a functigraph of graph $G$ as described in proof
of Lemma \ref{fff} and function $g:A\rightarrow B$ defined as: If
$t$ is even, then $v_{i}'= g(v_{i})$, for all $1\leq i \leq t;$
$v_{t-1}'= g(u_{t-1})= g(w_{t-1});$ $v_{t-3}'= g(u_{t})$, $w_{t-3}'=
g(w_{t})$; $v_{i}'= g(u_{i}), w_{i}'= g(w_{i})$, for all $i= 2k+1$,
$0\leq k \leq \frac{t}{2}-2;$ $v_{i}'= g(u_{i})= g(w_{i})$, for all
$i= 2k$, $1\leq k \leq \frac{t}{2}-1$. Now, if $t$ is odd and
$g:A\rightarrow B$ is defined by $v_{i}'= g(v_{i})$, for all $1\leq
i \leq t;$ $v_{t}'= g(u_{t})= g(w_{t});$ $v_{i}'= g(u_{i}), w_{i}'=
g(w_{i})$, for all $i= 2k+1$, $0\leq k \leq
\lfloor\frac{t}{2}\rfloor-1;$ $v_{i}'= g(u_{i})= g(w_{i})$, for all
$i= 2k$, $1\leq k \leq \lfloor\frac{t}{2}\rfloor.$ From this
construction and using same arguments as in proof of Lemma
\ref{fff}, we conclude that $fix(G)=t$ and $fix(F_G)= t+1.$ Hence,
we have the following result:
\begin{Lemma}\label{fffw} For any two integers $t_{1}\geq 2$ and $t_{2}=t_{1}+1$,
there exists a connected graph $G$ and a function $g$ such that
$fix(G)=t_{1}, fix(F_{G})= t_{2}$.
\end{Lemma}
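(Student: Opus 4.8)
The plan is to recycle the graph $G$ from Lemma \ref{fff}: the path $P_{t_1}\colon v_1v_2\cdots v_{t_1}$ with a pendant pair $u_i,w_i$ attached at each $v_i$. That graph already satisfies $fix(G)=t_1$, since one vertex of each of the $t_1$ pendant twin-pairs $\{u_i,w_i\}$ must lie in every fixing set and this many suffices. The whole burden therefore falls on the choice of the function $g\colon A\to B$: I would take $g$ essentially as in Lemma \ref{fff} but perturbed at the top end of the path, so that the functigraph $F_G$ acquires one additional pendant twin-pair in the copy $B$, following the explicit assignment displayed in the paragraph preceding the statement and treating the parities $t_1$ even and $t_1$ odd separately.

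First I would carry out the twin-set bookkeeping in $F_G$. Collapsing $g(u_i)=g(w_i)$ at an index $i$ makes $\{u_i,w_i\}$ a twin-pair in $A$ and, since then no arc of $g$ points at $u_i'$ or $w_i'$, it also makes $\{u_i',w_i'\}$ a twin-pair in $B$; splitting $g(u_i)\neq g(w_i)$ destroys both. The perturbation moves the collapse off the terminal vertex $v_{t_1}$ onto a neighboring vertex and redirects $u_{t_1},w_{t_1}$ onto images already in use; this keeps the number of collapsed pairs in $A$ unchanged while leaving $u_{t_1}',w_{t_1}'$ with no incoming arc, so they form one extra twin-pair in $B$. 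Tallying the collapsed indices should yield exactly $t_1+1$ twin-pairs, whence Proposition \ref{rem3.3} gives $fix(F_G)\ge t_1+1=t_2$.

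For the reverse inequality I would exhibit an explicit set $S^\ast$ consisting of one vertex from each of the $t_1+1$ twin-pairs (mirroring the set $\{u_i,u_i'\}$ used in Lemma \ref{fff}) and verify directly that $\Gamma_{S^\ast}(F_G)$ is trivial. The idea is that fixing one pendant of each collapsed pair pins the spine $v_1,\dots,v_{t_1}$ in place — these vertices are already distinguished along the path by their distances — and the oriented arcs $\{uv:v=g(u)\}$ then force the remaining pendants and the two copies $A$ and $B$ to be setwise preserved, so no nonidentity automorphism survives.

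The main obstacle is this last verification: merely meeting the necessary condition of Proposition \ref{rem3.3}, namely one vertex per twin-pair, does not by itself produce a fixing set, so I must show that the chosen $S^\ast$ genuinely kills every automorphism. The delicate point is ruling out symmetries that could interchange $A$ and $B$ or act nontrivially on the fibers of $g$; here I would lean on the asymmetry introduced by the arc orientations and on the parity-dependent perturbation, checking the cases $t_1$ even and $t_1$ odd in turn. Once rigidity of $S^\ast$ is established, combining the two bounds gives $fix(F_G)=t_1+1=t_2$ with $fix(G)=t_1$, as required.
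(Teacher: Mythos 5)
Your proposal follows essentially the same route as the paper: the paper's own argument (the paragraph preceding the lemma) reuses the graph of Lemma \ref{fff} with a parity-dependent perturbation of $g$ at the end of the path, so that the collapsed pairs in $A$ stay at $t_1$ total between the two copies while $\{u_{t_1}',w_{t_1}'\}$ survives as one extra twin pair with no incoming arc, then gets $fix(F_G)\geq t_1+1$ from Proposition \ref{rem3.3} and $fix(F_G)\leq t_1+1$ from an explicit set $S^\ast$ ``as in the proof of Lemma \ref{fff}.'' Your twin-pair bookkeeping matches the paper's construction, and your acknowledged remaining task --- verifying directly that $\Gamma_{S^\ast}(F_G)$ is trivial --- is exactly the step the paper also asserts without detailed proof, so your write-up is, if anything, more explicit about what needs checking.
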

\begin{Remark} Let $t_{1}, t_{2}\geq 2$ be any two integers,
then by definition of functigraph, it is not necessary that there
always exists a connected graph $G$ such that $fix(G)=t_{1},
fix(F_{G})= t_{2}$.
\end{Remark}

Consider an integer $t\geq 2$. For $t= 2$, we take $G=P_{3}$ and its
functigraph $F_G$, where function $g: A\rightarrow B$ is defined as:
$v_{i}'= g(v_{i})$ for all $1\leq i \leq 3$. For $t> 2$, we take
graph $G$ same as in proof of Lemma \ref{fff} by taking path of
order $t-1$ and its functigraph $F_{G}$, where $g: A\rightarrow B$
is defined as: $v_{i}'= g(v_{i})$, for all $1\leq i \leq t-1;$
$v_{t-2}'= g(u_{t-1}), w_{t-2}'= g(w_{t-1});$ $v_{i}'= g(u_{i}),
w_{i}'= g(w_{i}), 1\leq i \leq t-2.$ From this construction, we
distinguish the following result which shows that
$fix(G)+fix(F_{G})$ can be arbitrary large:
\begin{Lemma}\label{fffw1}
For any integer $t\geq 2$, there exists a connected graph $G$ and a
function $g$ such that $fix(G)+fix(F_{G})=t$.
\end{Lemma}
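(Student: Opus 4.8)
The plan is to handle the two regimes $t=2$ and $t>2$ separately, using the explicit graph and function already exhibited in the paragraph preceding the statement, and in each case to compute $fix(G)$ and $fix(F_G)$ and add them. For $t=2$ the graph is $G=P_{3}$ with the diagonal function $v_i'=g(v_i)$, so that $F_{P_{3}}$ is the $3$-ladder. I would first note that $P_3$ has a single nontrivial automorphism (the end-swap), giving $fix(P_3)=1$, and then argue that $F_{P_3}$ has automorphism group generated by the horizontal and vertical reflections, so that fixing one corner vertex kills all three nonidentity automorphisms and yields $fix(F_{P_3})=1$; the sum is $2=t$.

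For $t>2$ I would use the caterpillar $G$ from the proof of Lemma \ref{fff} built on the path $P_{t-1}$ (pendants $u_i,w_i$ at each $v_i$), together with the function $g$ specified just above the statement. The first task is to show $fix(G)=t-1$: the $t-1$ twin pairs $\{u_i,w_i\}$ force $fix(G)\ge t-1$ by Proposition \ref{rem3.3}, and I would verify that selecting one pendant from each pair simultaneously destroys every pendant transposition and the path reflection $v_i\leftrightarrow v_{t-i}$ (since that reflection moves $u_1$ to a pendant of $v_{t-1}$), giving the matching upper bound.

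The heart of the argument is $fix(F_G)=1$. For the lower bound I would observe that, because $g$ never outputs any $u_j'$ and sends $w_{t-1}\mapsto w_{t-2}'$ rather than to $w_{t-1}'$, the two pendants $u_{t-1}'$ and $w_{t-1}'$ of $v_{t-1}'$ both have degree $1$ in $F_G$ and are therefore twins; Proposition \ref{rem3.3} then gives $fix(F_G)\ge 1$. For the upper bound I would show that fixing $u_{t-1}'$ trivializes the stabilizer, by proving that the transposition $(u_{t-1}'\,w_{t-1}')$ is the only nonidentity automorphism of $F_G$. The mechanism is that $g$ is engineered to be maximally asymmetric: since $g(u_i)=v_i'$ lands on a path vertex while $g(w_i)=w_i'$ lands on a pendant, the pendants $u_i,w_i$ of $G_1$ acquire distinct neighborhoods in $F_G$ and cease to be twins, while the off-diagonal assignment $g(u_{t-1})=v_{t-2}'$ breaks the residual path reflection. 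A degree count then pins down a rigid backbone (the high-degree path vertices $v_i,v_i'$, together with the degree-$3$ vertex $w_{t-2}'$ and its forced neighbors), and propagating fixedness along edges forces every vertex except the twin pair $\{u_{t-1}',w_{t-1}'\}$ to be fixed.

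I expect the main obstacle to be exactly this last rigidity verification: ruling out all \emph{crossed} automorphisms at once. In particular one must confirm that no automorphism interchanges the copies $G_1$ and $G_2$ (excluded because the functigraph edges make the $G_2$-side vertices strictly denser, e.g.\ $v_{t-2}'$ picks up the extra preimage $u_{t-1}$), and that the single exceptional assignment at $i=t-1$ does not accidentally create a second symmetry. Once the degree sequence is organized and the uniquely-degreed anchors are shown to be fixed, the remaining forcing is routine bookkeeping along the path, mirroring the argument already used in Lemma \ref{fff}; the identity $fix(G)+fix(F_G)=(t-1)+1=t$ then completes the proof.
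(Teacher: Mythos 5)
Your proposal is correct and takes essentially the same route as the paper: the paper's proof of Lemma \ref{fffw1} is exactly the construction in the paragraph preceding the statement ($G=P_3$ with the diagonal map for $t=2$, and the caterpillar on the path of order $t-1$ with the given $g$ for $t>2$), and your verification that $fix(G)=t-1$ and $fix(F_G)=1$ (forced by the twin pendants $u_{t-1}',w_{t-1}'$ together with rigidity of the remaining vertices) is precisely the intended computation. Indeed, your write-up supplies the rigidity details that the paper leaves entirely unwritten.
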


Consider $t\geq 2$. We take graph $G$ by taking path of order $t+1$
and its functigraph $F_G$ as constructed in Lemma \ref{fffw1}, we
distinguish the following result which shows that
$fix(G)-fix(F_{G})$ can be arbitrary large:
\begin{Lemma}\label{fffw11}
For any integer $t\geq 2$, there exists a connected graph $G$ and a
function $g$ such that $fix(G)-fix(F_{G})=t$.
\end{Lemma}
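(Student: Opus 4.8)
The plan is to reuse verbatim the graph $G$ and functigraph $F_G$ described in the paragraph preceding the statement, and to evaluate the two fixing numbers separately; the claimed identity is then just arithmetic. Recall that $G$ is obtained from the path $P_{t+1}\colon v_1v_2\cdots v_{t+1}$ by attaching two pendant vertices $u_i,w_i$ to each $v_i$, and that $F_G$ is its functigraph for the Lemma~\ref{fffw1} function with path length $t+1$, namely $v_i'=g(v_i)$ for $1\le i\le t+1$, $\,v_t'=g(u_{t+1})$, $w_t'=g(w_{t+1})$, and $v_i'=g(u_i)$, $w_i'=g(w_i)$ for $1\le i\le t$. First I would record, exactly as in the proof of Lemma~\ref{fff}, that $N(u_i)=N(w_i)=\{v_i\}$ makes each $\{u_i,w_i\}$ a twin-set of $G$; Proposition~\ref{rem3.3} then forces any fixing set of $G$ to meet all $t+1$ of these pairs, while picking one vertex from each pair evidently fixes $G$. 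Hence $fix(G)=t+1$.

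The heart of the argument is to show $fix(F_G)=1$. For the lower bound I would exhibit the unique surviving twin-set of $F_G$: since $g$ maps nothing onto $u_{t+1}'$ or $w_{t+1}'$, both are pendants of $v_{t+1}'$ carrying no incoming cross-edge, so $N(u_{t+1}')=N(w_{t+1}')=\{v_{t+1}'\}$ and $\{u_{t+1}',w_{t+1}'\}$ is a twin-set. By Proposition~\ref{rem3.3} every fixing set of $F_G$ must contain at least one of them, and the transposition swapping them is a nontrivial automorphism, so $fix(F_G)\ge 1$.

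For the upper bound I would show that $\{u_{t+1}'\}$ is already a fixing set, i.e. that the swap $u_{t+1}'\leftrightarrow w_{t+1}'$ is the only nontrivial automorphism of $F_G$. The point of the construction is that $g$ breaks every other symmetry. No twin-swap $u_i\leftrightarrow w_i$ in $A$ survives, because the two pendants at each $v_i$ are sent to vertices of $B$ of different degree (one a path vertex, one a pendant); in $B$ the only pair of pendants with equal neighbourhoods is $\{u_{t+1}',w_{t+1}'\}$; and the two copies cannot be interchanged, since each $v_i'$ receives incoming cross-edges and so has strictly larger degree than $v_i$. I would turn these observations into a short degree-and-distance bookkeeping: an automorphism fixes $A$ and $B$ setwise, fixes every path vertex of $A$ (their degrees together with their position on the path, read off from the asymmetric pattern of incoming cross-edges at $v_{t-1}',v_t',v_{t+1}'$, determine them), and therefore fixes all their images and all remaining vertices except possibly $u_{t+1}',w_{t+1}'$. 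Consequently $\Gamma_{\{u_{t+1}'\}}(F_G)$ is trivial and $fix(F_G)\le 1$, so $fix(F_G)=1$.

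The main obstacle is precisely this rigidity step: confirming that the asymmetric cross-edge pattern leaves no automorphism beyond the single transposition. Since the construction is identical to the one in Lemma~\ref{fffw1} (only the path length has grown from $t-1$ to $t+1$), the automorphism analysis is the same one already performed there, so I would invoke it rather than redo the count. Combining the two values gives $fix(G)-fix(F_G)=(t+1)-1=t$, completing the proof.
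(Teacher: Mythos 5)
Your proposal is correct and takes essentially the same route as the paper: the paper likewise builds $G$ from Lemma~\ref{fff} on a path of order $t+1$, applies the function $g$ of Lemma~\ref{fffw1} so that the only surviving twin-set is $\{u_{t+1}',w_{t+1}'\}$, and obtains $fix(G)=t+1$, $fix(F_{G})=1$, hence $fix(G)-fix(F_{G})=t$. Your write-up in fact supplies more verification (the twin-set count in $G$ and the rigidity check for $F_{G}$) than the paper, which gives only the construction and cites the earlier analysis.
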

For $t\geq 2$, we take graph $G$ same as in proof of Lemma \ref{fff}
by taking path of order $t$ and its functigraph $F_{G}$, where $g:
A\rightarrow B$ is defined as: $v_{i}'= g(v_{i})= g(u_{i})=
g(w_{i})$, for all $1\leq i \leq t.$ From this type of construction,
we distinguish the following result which shows that
$fix(F_{G})-fix(G)$ can be arbitrary large:
\begin{Lemma}\label{fffw11}
For any integer $t\geq 2$, there exists a connected graph $G$ and a
function $g$ such that $fix(F_{G})-fix(G)=t$.
\end{Lemma}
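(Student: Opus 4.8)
The plan is to prove that, for the graph $G$ and function $g$ fixed above, $fix(G)=t$ and $fix(F_G)=2t$, whence the difference equals $t$. The equality $fix(G)=t$ is free: $G$ is exactly the caterpillar of Lemma~\ref{fff} (a path $v_1v_2\cdots v_t$ with two pendants $u_i,w_i$ at each $v_i$), so I would simply cite that lemma. The whole work therefore lies in establishing $fix(F_G)=2t$.

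For the lower bound I would read off the neighborhoods in $F_G$. Because $g(u_i)=g(w_i)=v_i'$, the cross edges give $N(u_i)=\{v_i,v_i'\}=N(w_i)$, so each $\{u_i,w_i\}$ is a non-adjacent twin-set; and since no cross edge is incident to any $u_i'$ or $w_i'$, we have $N(u_i')=\{v_i'\}=N(w_i')$, so each $\{u_i',w_i'\}$ is a twin-set as well. These $2t$ twin-sets are pairwise disjoint, so Proposition~\ref{rem3.3} forces every fixing set of $F_G$ to meet each of them, giving $fix(F_G)\ge 2t$.

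For the upper bound I would take $S=\{u_i,u_i':1\le i\le t\}$, of size $2t$, and show that $\Gamma_S(F_G)$ is trivial. The engine is the degree sequence: the vertices $v_i'$ have degree $6$ or $7$, the $v_i$ degree $4$ or $5$, the $u_i,w_i$ degree $2$, and the $u_i',w_i'$ degree $1$; these four ranges are disjoint, so any automorphism $\alpha$ preserves each class (in particular $G_1$ and $G_2$ cannot be interchanged). As the $v_i'$ induce precisely the path $v_1'\cdots v_t'$, the restriction of $\alpha$ to them is either the identity or the reversal $v_i'\mapsto v_{t+1-i}'$. Since $v_i$ is the \emph{unique} neighbor of $v_i'$ of degree $4$ or $5$, this action transfers verbatim to the $v_i$, and then $\alpha$ sends the degree-$2$ pair $\{u_i,w_i\}$ and the degree-$1$ pair $\{u_i',w_i'\}$ to the corresponding pairs. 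In the identity case the only remaining freedom is to swap $u_i\leftrightarrow w_i$ and $u_i'\leftrightarrow w_i'$ independently, and fixing $u_i,u_i'\in S$ annihilates every such transposition; in the reversal case position $1$ maps to position $t\ne 1$ (this is where $t\ge 2$ is used), so $\alpha(u_1)\in\{u_t,w_t\}\ne u_1$, contradicting $u_1\in S$. Hence $\Gamma_S(F_G)$ is trivial and $fix(F_G)\le 2t$.

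Combining the two bounds yields $fix(F_G)=2t$, so $fix(F_G)-fix(G)=2t-t=t$, as required. I expect the automorphism analysis in the upper bound to be the main obstacle: one must show that the degree data rigidify the two copies and weld each $v_i$ to its image $v_i'$, so that only the path reversal and the independent pendant swaps survive. The reversal is the delicate case, and it is precisely there that the hypothesis $t\ge 2$ is needed to guarantee that some vertex of $S$ is moved.
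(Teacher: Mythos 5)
Your proposal is correct and follows essentially the same route as the paper: the paper uses exactly this construction (the caterpillar of Lemma~\ref{fff} on a path of order $t$ with $g(v_i)=g(u_i)=g(w_i)=v_i'$), obtaining $fix(G)=t$ and $fix(F_G)=2t$ via the $2t$ twin-sets $\{u_i,w_i\}$, $\{u_i',w_i'\}$ and Proposition~\ref{rem3.3}. The only difference is that the paper leaves the verification implicit (``using same arguments as in proof of Lemma~\ref{fff}''), whereas you spell out the degree-class and path-reversal analysis showing $\{u_i,u_i':1\le i\le t\}$ is a fixing set.
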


\section{The fixing number of functigraphs of some families of graphs}

In this section, we give bounds of the fixing number of functigraphs
on complete graphs, trees and join graphs. We also characterize
complete graphs for every value of $s$, where $2\leq s\leq n-2$ such
that $fix(G)= fix(F_G).$

 Following result gives the sharp upper and lower bound for
 fixing number of functigraphs of complete graphs.
\begin{Theorem}\label{f13} Let $G=K_{n}$ be the complete graph of order $n\geq
3$, and let $1<s<n$, then $$2(n-s)-1\leq fix(F_{G}) \leq 2n-s-3.$$
\end{Theorem}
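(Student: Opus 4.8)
My plan is to read off both bounds from the twin structure of $F_G$ when $G=K_n$. Since $G_1$ and $G_2$ are complete, two vertices $u,u'\in A$ satisfy $N[u]=N[u']$ exactly when $g(u)=g(u')$, so each nonempty fiber $g^{-1}(v)$ with $v\in I$ is a twin-set contained in $A$, and these $s$ fibers have sizes summing to $n$. Dually, for $v,v'\in B$ we have $N[v]=N[v']$ iff $g^{-1}(v)=g^{-1}(v')$, which for distinct vertices forces both fibers to be empty; hence $B\setminus I$ is a twin-set of size $n-s$, while distinct vertices of $I$ are never twins. A short degree count (every vertex of $A$ has degree $n$, while a vertex $v\in B$ has degree $n-1+|g^{-1}(v)|$) rules out twins with one endpoint in $A$ and the other in $B$ for $n\geq 3$.

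For the lower bound I would apply Proposition \ref{rem3.3} to these twin-sets. The $s$ fibers force $\sum_{v\in I}(|g^{-1}(v)|-1)=n-s$ vertices into every fixing set, and the twin-set $B\setminus I$ forces another $(n-s)-1$ (this term is vacuous when $s=n-1$, where only the fibers contribute). Since these vertices lie in the disjoint parts $A$ and $B$ they are distinct, giving $fix(F_G)\geq (n-s)+(n-s-1)=2(n-s)-1$.

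For the upper bound I would exhibit a fixing set $S^{\ast}$ of size $2n-s-3$. I keep in $S^{\ast}$ all but one vertex of each fiber (fixing $n-s$ vertices of $A$), all but one vertex of $B\setminus I$ (fixing $n-s-1$ vertices), and $s-2$ carefully chosen vertices of $I$, so that $|S^{\ast}|=2n-s-3$. To see that $\Gamma_{S^{\ast}}(F_G)$ is trivial I would argue in stages: the vertices of $B\setminus I$ are precisely the vertices of minimum degree $n-1$, so any automorphism preserves $B\setminus I$ setwise and, since all but one of them lie in $S^{\ast}$, fixes all of $B\setminus I$. Next, a fiber of size at least $2$ contains a vertex of $S^{\ast}$, and its image vertex $v\in I$ has degree $n-1+|g^{-1}(v)|>n$; this forces $v$, and then the one remaining representative of that fiber, to be fixed. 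The only symmetry that can survive permutes the matched pairs $(a,g(a))$ coming from the size-$1$ fibers.

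The crux, and the step I expect to be the main obstacle, is the analysis of this residual group. I would show it is a symmetric group $S_p$ acting simultaneously on the $p$ size-$1$ fibers and their image vertices, a transposition swapping two such matched pairs; the key point ruling out stray automorphisms that send a degree-$n$ vertex of $A$ to a degree-$n$ vertex of $I$ is that at least one vertex of $A$ is fixed, which holds because $s\leq n-1$ guarantees a fiber of size $\geq 2$. Since $\sum_{v\in I}|g^{-1}(v)|=n>s$, at least one fiber has size $\geq 2$, so $p\leq s-1$ and hence $p-1\leq s-2$; choosing the $s-2$ vertices of $I$ to include $p-1$ of the size-$1$ image vertices kills $S_p$, while any further vertices of $I$ placed in $S^{\ast}$ are harmless. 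This makes $S^{\ast}$ a fixing set of size $2n-s-3$, so $fix(F_G)\leq 2n-s-3$, completing both bounds.
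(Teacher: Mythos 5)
Your proposal is correct, and it is organized quite differently from the paper's proof, even though both rest on the same twin structure (the fibers $g^{-1}(v)$, $v\in I$, are twin-sets in $A$; $B\setminus I$ is a twin-set; size-one fibers give the matched-pair automorphisms $(a\,a')(g(a)\,g(a'))$). The paper proceeds by a case analysis on $j$, the number of fibers of size one ($j=0$, $j=1$, and $2\leq j\leq s-1$), and in each case computes the exact size of a minimum fixing set, namely $2(n-s)-1$ when $j\leq 1$ and $2n-2s+j-2$ when $2\leq j\leq s-1$; the two bounds of the theorem then fall out as the extreme values over $j$. You instead prove the two bounds directly and uniformly in $g$: the lower bound by the twin-forcing count via Proposition \ref{rem3.3} (which is essentially the same counting the paper does, but stated as a genuine lower-bound argument rather than as part of an exact computation), and the upper bound by exhibiting one explicit set of size $2n-s-3$ — all-but-one of each fiber, all-but-one of $B\setminus I$, and $s-2$ vertices of $I$ covering $p-1$ of the size-one images — and verifying its stabilizer is trivial. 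What your route buys is rigor at the places the paper is silent: you actually check that the stabilizer of the constructed set is trivial, and in particular you handle the genuine subtlety that vertices of $A$ and size-one-fiber vertices of $I$ have the same degree $n$, so an automorphism could a priori mix the two sides; your observation that $s<n$ forces a fiber of size at least two, hence a fixed vertex inside $A$, is exactly what pins $A$ setwise, and the paper never addresses this. What the paper's route buys is finer information: it identifies the exact value of $fix(F_{K_n})$ as a function of $j$, showing which functions $g$ attain which intermediate values between the two bounds (this is what drives the subsequent corollaries and Proposition \ref{f112}), whereas your argument certifies only the two endpoint inequalities.
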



\begin{proof}We assume $I= \{v_{1},v_{2},...,v_{s}\}$ and $n_{i}= |\{u\in A : v_{i}= g(u)\}|$,
for all $i$, where $1\leq i\leq s$. Also, let $j= |\{n_{i} :
n_{i}=1, 1\leq i\leq s\}|$. There are three possible cases for $j$
in functigraph $F_G$:
\begin{enumerate}
\item If $j=0$, then $2 \leq n_{i}\leq n-2$, for all $i$, where $1\leq i\leq
s.$ Thus, by definitions of $K_{n}$ and $n_{i}$, there are $s$ twin
set of vertices in $A$ and a twin-set has $n_i$ number of vertices
for each $i$, where $1\leq i\leq s$. Hence, $S^{\ast}$ contains
$\sum_{i=1}^{s}(n_{i}-1)$ vertices from $A$. Moreover, $B$ contains
$|B\setminus I|$ twin vertices and hence $S^{\ast}$ contains $n-s-1$
vertices from $B$. Hence, $|S^{\ast}|= \sum_{i=1}^{s}(n_{i}-1)+
(n-s-1) =2(n-s)-1.$
\item If $j=1,$ then without loss of generality, we assume
that $n_{s}=1$. Thus, there are $s-1$ twin-sets of vertices in $A$
and a twin-set has $n_i$ number of vertices for each $i$, where
$1\leq i\leq s-1$. Thus, $S^{\ast}$ contains
$\sum_{i=1}^{s-1}(n_{i}-1)$ vertices from $A$ and $n-s-1$ vertices
from $B$ as in the previous case. Hence,
$|S^{\ast}|=\sum_{i=1}^{s-1}(n_{i}-1)+(n-s-1) = 2(n-s)-1.$

\item If $2\leq j\leq s-1.$ Let $N=\{n_1,n_2,...,n_s\}$. We
partition the set $N$ into $N_1$ and $N_2$ where $N_1$ contains all
those elements of $N$ in which $n_i> 1$ and $N_2$ contains all those
elements of $N$ in which $n_i= 1$. Let $|N_1|=l$, then $j+l=s$ where
$j= |N_2|$. We re-index elements of $N_1$ and $N_2$ as follows:
$N_1= \{n_1^{(1)}, n_2^{(1)},..., n_l^{(1)}\}$, $N_2= \{n_1^{(2)},
n_2^{(2)},..., n_j^{(2)}\}$ where superscripts shows associations of
an element in $N_1$ or $N_2$. Now, $A$ contains $l$ twin-sets of
vertices and each twin-set has $n_k^{(1)}$ number of vertices for
each $k$, where $1\leq k\leq l$. Also, remaining $j$ vertices of $A$
are those having exactly $j$ images under $g$ and hence $j-1$ such
vertices must belong to $S^{\ast}$. For otherwise, let $u,u'\in A$
be two such vertices, then there exists an automorphism
$(uu')(g(u)g(u'))$ in $\Gamma(F_G)$. Hence, $S^{\ast}$ contains
$[\sum_{k=1}^{l}(n_{k}^{(1)}-1)]+(j-1)$ vertices from $A$. Again
$S^{\ast}$ contains $n-s-1$ vertices from $B$. Hence,
$|S^{\ast}|=[\sum_{k=1}^{l}(n_{k}^{(1)}-1)]+(j-1)+(n-s-1)=
2n-2s+j-2.$

%
%
%
%

\end{enumerate}
\end{proof}


\begin{Corollary}\label{f11} Let $G=K_{n\geq 3}$ be the complete graph and $2<s<n$ in functigraph of $G$.
If $n-s$ vertices of $A$ have same image under $g$, then
$fix(F_{G})=2n-(s+4)$.
\end{Corollary}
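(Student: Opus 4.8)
The plan is to instantiate Theorem \ref{f13} in the specific configuration described, where the multiplicities $n_i$ are as unbalanced as possible. The hypothesis ``$n-s$ vertices of $A$ have same image under $g$'' tells me how the $n$ vertices of $A$ are distributed among the $s$ fibers of $g$: one image class absorbs $n-s$ of them, and the remaining $n-(n-s)=s$ vertices are spread over the other fibers. Since every one of the $s$ images in $I$ must be attained, and there are $s-1$ images left to cover with exactly $s$ vertices, the only way is that one more image receives $2$ vertices while each of the other $s-2$ images receives a single vertex. So I would first record the precise profile: one multiplicity equal to $n-s$, one equal to $2$, and $s-2$ multiplicities equal to $1$.

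With that profile in hand, I would identify which case of Theorem \ref{f13} applies. The number of singleton fibers is $j = s-2$, and since $2 < s < n$ forces $s \geq 3$, we have $j = s-2 \geq 1$; moreover $j = s-2 \leq s-1$ and, as long as $s \geq 4$, we have $j \geq 2$, placing us in Case (3) of the theorem. (I would note the boundary value $s=3$ separately, where $j=1$ falls under Case (2), and check that the final formula still agrees.) In Case (3) the theorem gives $|S^\ast| = 2n-2s+j-2$. Substituting $j = s-2$ yields $|S^\ast| = 2n - 2s + (s-2) - 2 = 2n - s - 4 = 2n-(s+4)$, which is exactly the claimed value.

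The main obstacle, such as it is, is purely bookkeeping: making sure the multiplicity count is forced rather than merely possible, and that the edge cases in $s$ land on the right branch of Theorem \ref{f13}. Concretely, I must argue that with $n-s$ vertices committed to one image and all $s$ images required to appear, the remaining $s$ vertices cannot produce more than one non-singleton fiber without leaving some image in $I$ uncovered — a short pigeonhole argument on $s$ vertices covering $s-1$ remaining images. Once $j=s-2$ is pinned down, the corollary is an immediate substitution into the formula of the preceding theorem, so I would present it as a direct specialization with a one-line verification of the arithmetic and a brief remark reconciling the $s=3$ boundary with Case (2).
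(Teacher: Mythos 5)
Your proof is correct and is essentially the paper's own (implicit) argument: the paper states this result as a direct corollary of Theorem \ref{f13}, and your derivation---pinning down the fiber profile by pigeonhole as one fiber of size $n-s$, one of size $2$, and $s-2$ singletons, so that $j=s-2$, then substituting into the case formula $2n-2s+j-2$ of the theorem's proof, with the $s=3$ boundary checked against Case (2)---is exactly the intended specialization. One caveat worth recording (it affects the statement as much as your proof): both you and the paper implicitly need $n-s\geq 2$, i.e.\ $s\leq n-2$, since for $s=n-1$ the size-$(n-s)$ fiber is itself a singleton, forcing $j=s-1$ and the value $2n-s-3$ rather than $2n-(s+4)$.
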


\begin{Corollary}\label{f11} Let $G=K_{n\geq 3}$ be the complete graph and $2<s<n$ in functigraph of $G$.
If $n-s+1$ vertices of $A$ have the same image under $g$, then
$fix(F_{G})=2n-(s+3)$.
\end{Corollary}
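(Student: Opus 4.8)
The plan is to derive this corollary directly from the third case ($2 \leq j \leq s-1$) of Theorem \ref{f13}, since the hypothesis pins down the value of the parameter $j$ exactly. First I would translate the condition into the language of the $n_i$. Writing $n_i = |\{u \in A : v_i = g(u)\}|$ as in Theorem \ref{f13} and relabelling so that $v_1$ is the common image, the assumption that $n-s+1$ vertices of $A$ share an image reads $n_1 = n - s + 1$. Since $\sum_{i=1}^s n_i = |A| = n$ and every $v_i \in I$ has at least one preimage under $g$, the remaining $n - n_1 = s - 1$ vertices of $A$ must be spread over the remaining $s - 1$ images $v_2, \dots, v_s$ with one vertex each; hence $n_2 = n_3 = \cdots = n_s = 1$.

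Next I would read off the parameter $j = |\{i : n_i = 1\}|$. For the distribution above, exactly the indices $2, \dots, s$ give $n_i = 1$, so $j = s - 1$. The hypothesis $s > 2$ guarantees $j = s - 1 \geq 2$, and $s < n$ guarantees $n_1 = n - s + 1 \geq 2$, so that $v_1$ indeed carries a nontrivial twin-set in $A$ and we are genuinely in the third case of Theorem \ref{f13}.

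Finally, I would substitute $j = s - 1$ into the count $|S^{\ast}| = 2n - 2s + j - 2$ obtained there, giving
$$fix(F_G) = |S^{\ast}| = 2n - 2s + (s-1) - 2 = 2n - s - 3 = 2n - (s+3),$$
which is the desired value.

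I do not anticipate a real obstacle here: the corollary is merely a specialization of the enumeration already performed in Theorem \ref{f13}. The only step deserving a moment of care is confirming that the hypothesis forces the full distribution $(n-s+1, 1, \dots, 1)$ rather than merely producing one block of size $n-s+1$; this is immediate from the pigeonhole constraint that all $s$ images of $g$ are used, leaving precisely $s-1$ vertices to occupy the $s-1$ remaining images one apiece.
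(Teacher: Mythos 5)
Your proof is correct and follows the route the paper intends: the corollary is stated in the paper without its own proof, as a direct specialization of Theorem \ref{f13}, and your derivation---forcing the preimage distribution $(n-s+1,1,\ldots,1)$ by pigeonhole, hence $j=s-1$, checking that $s>2$ puts you in Case (3), and substituting into $|S^{\ast}|=2n-2s+j-2$---is exactly that specialization. The two details you single out (that all remaining $s-1$ images must receive exactly one preimage each, and that $j=s-1\geq 2$ so Case (3) genuinely applies) are precisely the points that needed verification.
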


\begin{Corollary}\label{f12} Let $G=K_{n\geq 3}$ be the complete graph and $2<s<n$ in functigraph of $G$.
If $|g^{-1}(v)|=\frac{n}{s}$ for all $v\in I$, then
$fix(F_{G})=2(n-s)-1$.
\end{Corollary}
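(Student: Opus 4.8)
The plan is to realize this statement as a direct specialization of Case~1 ($j=0$) in the proof of Theorem~\ref{f13}. First I would record the size bookkeeping that holds regardless of any uniformity hypothesis: the fibers $g^{-1}(v_1),\dots,g^{-1}(v_s)$ partition $A$, so $\sum_{i=1}^{s} n_i = |A| = n$. The role of the hypothesis $|g^{-1}(v)|=\frac{n}{s}$ is then only to pin down the \emph{sizes} of these fibers so as to land in the right case of the theorem.

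The key observation is that the hypothesis forces $j=0$, where $j$ denotes the number of fibers of size $1$ in the notation of Theorem~\ref{f13}. Indeed, for the integer $|g^{-1}(v)|$ to equal $\frac{n}{s}$ we need $s \mid n$; combined with $s<n$ this yields $n \ge 2s$, hence $n_i = \frac{n}{s} \ge 2$ for every $i$. I would also verify the upper constraint $n_i \le n-2$ that Case~1 requires: since $s>2$ we have $\frac{n}{s} < \frac{n}{2} \le n-2$ for all $n\ge 3$. Thus every fiber has size strictly between $1$ and $n-1$, placing us squarely in the regime $2\le n_i \le n-2$ of Case~1.

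With $j=0$ established, I would simply invoke the formula of that case, namely $|S^{\ast}| = \sum_{i=1}^{s}(n_i-1) + (n-s-1)$. Substituting the identity $\sum_{i=1}^{s}(n_i-1) = \bigl(\sum_{i=1}^{s} n_i\bigr) - s = n-s$ gives $|S^{\ast}| = (n-s)+(n-s-1) = 2(n-s)-1$, which is the claimed value of $fix(F_{G})$.

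I do not anticipate a serious obstacle: the entire content lies in recognizing that the uniform-fiber hypothesis is precisely a sufficient condition for $j=0$. The one point demanding care is the divisibility step showing $\frac{n}{s}\ge 2$; without it one could imagine a fiber of size $1$ and slip into Cases~2 or~3, where the resulting count differs. It is also worth noting that the equal-size hypothesis is in fact stronger than necessary, since the conclusion $2(n-s)-1$ holds whenever \emph{all} fibers have size at least $2$ (the identity $\sum_{i=1}^{s}(n_i-1)=n-s$ being insensitive to how the sizes are distributed); the uniform assumption merely gives a clean, checkable way to guarantee this.
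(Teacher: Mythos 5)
Your proof is correct and follows essentially the same route as the paper: the paper's own argument is exactly the Case~1 ($j=0$) twin-set count of Theorem~\ref{f13} specialized to fibers of equal size $\frac{n}{s}$, namely $s(\frac{n}{s}-1)+(n-s-1)=2(n-s)-1$. Your added verification that $s\mid n$ and $s<n$ force every $n_i=\frac{n}{s}\ge 2$ (hence $j=0$), and your remark that uniformity of the fibers is stronger than needed, are sound refinements the paper leaves implicit (note only that the hypotheses $2<s<n$ actually force $n\ge 4$, which is what makes your inequality $\frac{n}{2}\le n-2$ valid).
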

\begin{proof} Since, there are exactly $s$ twin sets of vertices of $A$ each of
cardinality $\frac{n}{s}$ and one twin set of $B$ of cardinality
$n-s$. Hence, $fix(F_{G})= s(\frac{n}{s}-1)+n-s-1=2(n-s)-1$.
\end{proof}

\begin{Proposition}\label{f112} For every pair of integers $n$ and $s$, where $2\leq s\leq n-2$, there are exactly
 $s-1$ complete graphs $G$ such that $fix(G)= fix(F_{G})$ for some function $g$.
\end{Proposition}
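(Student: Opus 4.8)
I read the statement as follows: for a \emph{fixed} integer $s\geq 2$, we count the orders $n$ (equivalently, the complete graphs $K_n$) admissible under the standing constraint $s\leq n-2$ for which some function $g$ with $|I|=s$ achieves $fix(F_{K_n})=fix(K_n)$; the claim is that this count is exactly $s-1$. The plan begins by recording $fix(K_n)=n-1$: every two vertices of $K_n$ are adjacent twins, so $V(K_n)$ is a twin-set of order $n$, whence Proposition \ref{rem3.3} forces any fixing set to have at least $n-1$ vertices, while any $n-1$ vertices manifestly fix $K_n$.

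Next I would extract from the proof of Theorem \ref{f13} the precise list of attainable values of $fix(F_{K_n})$. Writing $j$ for the number of indices $i$ with $n_i=1$, that proof gives $fix(F_{K_n})=2n-2s-1$ when $j\in\{0,1\}$ and $fix(F_{K_n})=2n-2s+j-2$ when $2\leq j\leq s-1$. Hence, as $j$ ranges over its admissible values, $fix(F_{K_n})$ ranges over the block of consecutive integers $\{2n-2s-1,\,2n-2s,\,\ldots,\,2n-s-3\}$, a set of exactly $s-1$ values. I would also isolate the realizability condition: a given $j$ is attained by some $g$ with $|I|=s$ exactly when the $s$ preimage sizes (positive integers summing to $n$, with exactly $j$ of them equal to $1$) can be chosen, i.e. when $n\geq 2s-j$.

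I would then impose $fix(F_{K_n})=fix(K_n)=n-1$ and match $n-1$ against the two formulas. The case $j\in\{0,1\}$ forces $n=2s$, realized by $j=0$ with all preimages of size $2$; the case $2\leq j\leq s-1$ forces $j=2s-n+1$, and the constraints $2\leq j\leq s-1$ translate into $s+2\leq n\leq 2s-1$, each such $j$ being realizable since there $n\geq 2s-j=n-1$. Combining the two, the admissible orders are exactly $n\in\{s+2,s+3,\ldots,2s\}$, a range containing $2s-(s+2)+1=s-1$ integers, each giving a distinct $K_n$. For completeness I would rule out everything outside this range: if $n>2s$ then $n-1<2n-2s-1$, the minimum attainable value of $fix(F_{K_n})$, so equality is impossible, while $n<s+2$ is already excluded by the hypothesis $s\leq n-2$.

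The main obstacle is the realizability bookkeeping rather than any single computation: one must be certain that each value in the consecutive block $\{2n-2s-1,\ldots,2n-s-3\}$ that equals $n-1$ is attained by a \emph{genuine} function $g$ whose image has size exactly $s$, i.e. that the prescribed partition of $n$ into $s$ parts with the required number of ones actually exists. Verifying the inequality $n\geq 2s-j$ at the relevant value $j=2s-n+1$ (and at $j=0$ when $n=2s$) is precisely what closes this gap and secures the count of $s-1$.
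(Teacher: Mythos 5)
Your proposal is correct, and it identifies the same set of graphs the paper does, namely $K_{s+2},K_{s+3},\ldots,K_{2s}$ (the paper writes these as $\{K_{s+i+2}:0\leq i\leq s-2\}$). The overall strategy is also the same in outline --- both arguments are anchored entirely on Theorem \ref{f13} --- but the execution of the realization half is genuinely different. For the exclusion of orders $n>2s$, the paper uses the stated lower bound $2(n-s)-1$ of Theorem \ref{f13} and observes that $fix(K_{s+i+2})=s+i+1$ falls below it once $i>s-2$; your comparison $n-1<2n-2s-1$ is the same inequality. For the realization, however, the paper exhibits explicit functions $g$ case by case for $s=2,3,4,\ldots$ and closes the enumeration with the phrase ``By continuing in this way,'' which is an incomplete induction. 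You instead read off from the proof of Theorem \ref{f13} the exact attainable values ($2(n-s)-1$ for $j\in\{0,1\}$ and $2n-2s+j-2$ for $2\leq j\leq s-1$, where $j$ counts singleton preimages), solve $fix(F_{K_n})=n-1$ for $j$, and verify that the required preimage-size partition of $n$ exists via the condition $n\geq 2s-j$. This parametric argument is uniform in $s$ and $n$, and in fact it recovers the paper's hand-built functions as special cases (e.g.\ $j=0$ with all preimages of size $2$ when $n=2s$, and $j=s-1$ with one preimage of size $3$ when $n=s+2$); what it buys is a complete proof of the realization step rather than a pattern-plus-ellipsis, at the modest cost of relying on the value formulas inside the proof of Theorem \ref{f13} rather than only on its stated bounds. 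Your handling of the ambiguous role of $n$ in the statement (counting admissible orders for a fixed $s$) also matches the paper's evident intent, and is stated more carefully than the paper states it.
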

\begin{proof}We claim that for every $s$ where $2\leq
s\leq n-2$, the required $s-1$ complete graphs are $\{K_{s+i+2}:
0\leq i\leq s-2\}$ by Theorem \ref{f13}. For otherwise if $G\in
\{K_{s+i+2}: s-1\leq i\leq n-(s+2)\}$, then $fix(G=K_{s+i+2})=
s+i+1$ and by Theorem \ref{f13}, $2i+3\leq fix(F_{G={K_{s+i}}})\leq
s+2i+1.$ Since, $i>s-2$ so, $fix(G)\neq fix(F_G).$

Next, we define those functions $g_{i}: A\rightarrow B$ in
functigraph of $G$, where $G\in \{K_{s+i+2}, 0\leq i\leq s-2\}$ such
that $fix(G)= fix(F_{G})$. We discuss the following cases 
\begin{enumerate}
\item For $s=2$, we have $G=K_4$ and there are two definitions of
function $g$ satisfying hypothesis, one is defined as
$g^{-1}(v_{1})= \{u_{j}: 1\leq j\leq 3\}$, $g^{-1}(v_2)= \{u_4\}$.
 Other definition of $g$ is $g^{-1}(v_j)= \{u_{2j-1},
u_{2j}\}$ for all $j$ where $1\leq j\leq 2.$

\item For $s=3$, we have $G=K_5,K_6$. In $K_5$, $g$ is defined as
$g^{-1}(v_{1})= \{u_{j}: 1\leq j\leq 3\}$, $g^{-1}(v_2)=
\{u_4,u_5\}$. In $K_6$, again there are two definitions of $g$
satisfying hypothesis. One definition is $g^{-1}(v_{1})= \{u_{j}:
1\leq j\leq 3\}$, $g^{-1}(v_{2})= \{u_{4}, u_{5}\}$, $g(u_{6})=
v_{3}.$ Other definition of $g$ is $g^{-1}(v_{j})= \{u_{2j-1},
u_{2j}\}$ for all $j$ where $1\leq j\leq 3.$

\item For $s=4$, we have $G=K_6,K_7,K_8$. In $K_6$, $g$ is defined
as $g^{-1}(v_{1})= \{u_{j}: 1\leq j\leq 3\}$, $g^{-1}(v_j)= \{u_j\}$
for all $j$ where $4\leq j \leq 6$. In $K_7$, $g^{-1}(v_{1})=
\{u_{j}: 1\leq j\leq 3\}$, $g^{-1}(v_{2})= \{u_{j}: 4\leq j\leq
5\}$, $g(u_{j})= v_{j-3}$ for all $j$ where, $6\leq j\leq 7.$ In
$K_8$, again there are two definitions of $g$. One definition is
$g^{-1}(v_{1})= \{u_{j}: 1\leq j\leq 3\}$, $g^{-1}(v_{2})= \{u_{4},
u_{5}\}$, $g^{-1}(v_{3})= \{u_{6}, u_{7}\}$ $g(u_{8})= v_{4}.$ Other
definition of $g$ is $g^{-1}(v_{j})= \{u_{2j-1}, u_{2j}\}$ for all
$j$ where $1\leq j\leq 4.$\\
By continuing in this way.
\item For $s=n-2$, we have $G=K_n,K_{n+1},...,K_{2n-5},K_{2n-4}$.
In $K_n$, $g$ is defined as $g^{-1}(v_{1})= \{u_{j}: 1\leq j\leq
3\}$, $g^{-1}(v_j)= \{u_j\}$ for all $j$ where $4\leq j \leq n $. In
$K_{n+1}$, $g^{-1}(v_{1})= \{u_{j}: 1\leq j\leq 3\}$,
$g^{-1}(v_{2})= \{u_{j}: 4\leq j\leq 5\}$, $g(u_{j})= v_{j-3}$ for
all $j$ where, $6\leq j\leq n+1.$ In $K_{n+2}$, $g^{-1}(v_{1})=
\{u_{j}: 1\leq j\leq 3\}$, $g^{-1}(v_{2})= \{u_{j}: 4\leq j\leq
5\}$, $g^{-1}(v_{3})= \{u_{j}: 6\leq j\leq 7\}$, $g(u_{j})= v_{j-3}$
for all $j$ where, $8\leq j\leq n+2$. Continuing in similar way till
$G=K_{2n-5}$, we can find $g$ for by similar definitions. In
$K_{2n-4}$, again there are two definitions of $g$. One definition
is $g^{-1}(v_{1})= \{u_{j}: 1\leq j\leq 3\}$, $g^{-1}(v_{j})=
\{u_{2j}, u_{2j+1}\}$, where, $2\leq j\leq n-3$, $g(u_{2n-4})=
v_{n-2}.$ Other definition of $g$ is $g^{-1}(v_{j})= \{u_{2j-1},
u_{2j}\}$ for all $j$ where $1\leq j\leq n-2.$

\end{enumerate}

\end{proof}
\begin{Remark} For each $2\leq s\leq n-2$, there are exactly $s$ mapping $g: V(K_{n})\rightarrow
V(K_{n})$ such that $fix(G)= fix(F_{G}).$
\end{Remark}
\begin{Remark} $K_4$ is the only complete graph such that  $fix(G)=
fix(F_{G}),$ for all functions $g$.
\end{Remark}

Let $e^{\ast}$ be an edge of a connected graph $G$. Let $G-e^{\ast}$
is the graph obtained by deleting edge $e^{\ast}$ from graph $G.$ A
vertex $v$ of a graph $G$ is called \emph{saturated} if it is
adjacent to all other vertices of $G$.

\begin{Theorem}\label{f1122} Let $G$ be a complete graph of order $n\geq 3$ and $G_i=
G-ie^{\ast}$ for all $i$ where $1\leq i\leq\lfloor
\frac{n}{2}\rfloor$ and $e^{\ast}$ joins two saturated vertices of
the graph $G$. If $g$ is constant function, then
$$fix(F_{G_i})=\left\{
\begin{array}{ll}
2n-2i-3,\,\,\,\,\,\,\,\,\,\,\,\,\,\,\,\,\,\,\,if \,\,\,\, 1\leq i\leq \lfloor \frac{n}{2}\rfloor-1, &  \\
n-1,\,\,\,\,\,\,\,\,\,\,\,\,\,\,\,\,\,\,\,\,\,\,\,\,\,\,\,\,\,\,\,\,\, if\,\,\,\,\,i= \frac{n}{2}\,\,\,\,\,\,\,\,\,\mbox{and $n$ is even},&  \\
2\lfloor \frac{n}{2}\rfloor-1, 2\lfloor \frac{n}{2}\rfloor
\,\,\,\,\,\,\,\,\,\,\,\, if\,\,\,\,\,i=
\lfloor\frac{n}{2}\rfloor\,\,\,\,\mbox{and $n$ is odd}.
\end{array}
\right.$$
\end{Theorem}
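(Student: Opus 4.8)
The plan is to exploit the fact that a constant $g$ makes every vertex of $A$ adjacent to one common vertex $b_{0}\in B$ (so $I=\{b_{0}\}$ and $s=1$), and that $b_{0}$ then has strictly larger degree than any other vertex of $F_{G_{i}}$; hence $b_{0}$ is fixed by every automorphism and need never be placed in a fixing set. First I would record the twin structure of $G_{i}=K_{n}-ie^{\ast}$: since the $i$ deleted edges join saturated vertices of $K_{n}$ they form a matching on $2i$ vertices, so $G_{i}$ has one adjacent twin-set $P$ consisting of the $n-2i$ saturated vertices together with $i$ non-adjacent twin-sets $\{x_{k},y_{k}\}$ of size $2$, and $\Gamma(G_{i})\cong S_{n-2i}\times(S_{2}\wr S_{i})$.

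The key reduction is to delete $b_{0}$. Because the only edges between $A$ and $B$ run to $b_{0}$, the graph $F_{G_{i}}-b_{0}$ splits into two connected components: the copy on $A$, which induces $K_{n}-ie^{\ast}$, and the copy on $B\setminus\{b_{0}\}$, which induces a complete graph on $n-1$ vertices minus a matching. These two components have different orders, so they are non-isomorphic; therefore $\Gamma(F_{G_{i}})\cong\Gamma(A\text{-component})\times\Gamma(B\text{-component})$ with $b_{0}$ fixed, and consequently $fix(F_{G_{i}})=fix(A\text{-component})+fix(B\text{-component})$. The matching deleted from the $B$-component has size $i$ when $b_{0}$ is a saturated vertex of the second copy and size $i-1$ when $b_{0}$ is a matching endpoint (in the latter case the former partner of $b_{0}$ becomes saturated); this dichotomy is exactly what drives the case split in the statement.

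I would then prove the auxiliary count $fix(K_{m}-je^{\ast})=m-j-1$ when $m-2j\ge 1$, and $fix(K_{m}-je^{\ast})=j$ when $m-2j=0$ (the cocktail-party case). The lower bound is immediate from Propositions \ref{fazf} and \ref{rem3.3}: the adjacent twin-set of saturated vertices forces $m-2j-1$ elements (none when there is at most one saturated vertex, since a unique universal vertex is automatically fixed) and each of the $j$ size-$2$ pairs forces one. For the matching upper bound I would put one vertex of each pair into $S^{\ast}$ and check that the resulting stabilizer is trivial; the only delicate point is that a single representative per pair also kills the block permutations inside $S_{2}\wr S_{j}$, which follows from the transposition argument $(uu')(g(u)g(u'))$ already used in case~$(3)$ of the proof of Theorem \ref{f13}.

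Assembling these pieces gives the three regimes. For $1\le i\le\lfloor n/2\rfloor-1$ both components carry at least one saturated vertex, so the total is $(n-i-1)+\big((n-1)-i-1\big)=2n-2i-3$. For $i=n/2$ with $n$ even the $A$-component is a cocktail-party graph with fixing number $n/2$, while the $B$-component has a single saturated vertex with fixing number $n/2-1$, giving $n-1$. The main obstacle is the odd boundary case $i=\lfloor n/2\rfloor$: here the type of $b_{0}$ genuinely changes the matching size in the $B$-component, so one must run both sub-cases and, crucially, invoke the cocktail-party exception where the clean formula $m-j-1$ is replaced by $j$. I expect the careful bookkeeping of saturated versus matching-endpoint images of $b_{0}$, together with this cocktail-party exception, to be where the two listed values $2\lfloor n/2\rfloor-1$ and $2\lfloor n/2\rfloor$ are separated, and hence the genuinely delicate part of the argument.
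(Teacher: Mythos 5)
Your reduction rests on the claim that, after deleting the constant image $b_{0}$, one has $\Gamma(F_{G_i})\cong\Gamma(A\mbox{-component})\times\Gamma(B\mbox{-component})$ and hence $fix(F_{G_i})=fix(A\mbox{-component})+fix(B\mbox{-component})$. This is false in exactly the sub-cases that matter. Deleting $b_{0}$ forgets the non-edge between $b_{0}$ and its matching partner: when $b_{0}$ is a matching endpoint, its former partner $w'$ is the unique vertex of $F_{G_i}$ not adjacent to the (automorphism-fixed) vertex $b_{0}$, so every automorphism of $F_{G_i}$ must fix $w'$. The correct decomposition is $\Gamma(F_{G_i})\cong\Gamma(G_i)\times\Gamma_{w'}(B\mbox{-component})$, the stabilizer of $w'$, not the full automorphism group of the $B$-component. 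Your count uses the unconstrained group, and the discrepancy is exactly one vertex: $w'$ is saturated in the $B$-component, and once it is forced fixed the saturated twin-set of that component needs one fewer vertex in any fixing set.

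Concretely this breaks the proposal in two places. First, for $1\le i\le\lfloor \frac{n}{2}\rfloor-1$ the theorem asserts $2n-2i-3$ for every constant $g$; you only treat the sub-case where $b_{0}$ is saturated, and for the other admissible choice your product formula yields $(n-i-1)+\bigl((n-1)-(i-1)-1\bigr)=2n-2i-2$, contradicting the statement, whereas the stabilizer-corrected count gives $2n-2i-3$ as required. Second, and worse, in the odd boundary case $i=\lfloor \frac{n}{2}\rfloor$ your method cannot produce the value $2\lfloor \frac{n}{2}\rfloor-1$ at all: when $b_{0}$ is saturated the $B$-component is a cocktail-party graph with fixing number $\lfloor \frac{n}{2}\rfloor$, and when $b_{0}$ is a matching endpoint the $B$-component is $K_{n-1}$ minus an $(\lfloor \frac{n}{2}\rfloor-1)$-matching whose unconstrained fixing number is $(n-1)-(\lfloor \frac{n}{2}\rfloor-1)-1=\lfloor \frac{n}{2}\rfloor$ as well; so both sub-cases give $2\lfloor \frac{n}{2}\rfloor$. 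The cocktail-party exception you invoke does not separate the two listed values; the separation comes precisely from the constraint you dropped: with $b_{0}$ a matching endpoint, $w'$ is forced fixed, and then the other saturated vertex of the $B$-component is forced fixed too (saturated vertices must map to saturated vertices and $w'$ is already occupied as an image), so the $B$-side contributes only $\lfloor \frac{n}{2}\rfloor-1$, giving $2\lfloor \frac{n}{2}\rfloor-1$. The paper's own proof, though terse, counts twin-sets and forced vertices directly inside $F_{G_i}$, where the forced fixing of both $u'$ and its unique non-neighbor is automatic; if you repair your decomposition by replacing $\Gamma(B\mbox{-component})$ with $\Gamma_{w'}(B\mbox{-component})$, your otherwise clean component argument goes through and recovers all three regimes.
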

\begin{proof}We consider the following
three cases for $i$:
\begin{enumerate}
\item For $1\leq i\leq\lfloor
\frac{n}{2}\rfloor -1$. Since, any two saturated vertices are twin
however the converse is not true. Thus, on deleting edge $e^{\ast}$
between two saturated vertices, the two vertices will no longer
remain saturated, however these will remain twin. Hence, for each
$i$ where $1\leq i\leq\lfloor \frac{n}{2}\rfloor -1$, $G_i$ contains
$n-2i$ saturated vertices and $i$ twin set of vertices each of
cardinality two. Hence $fix(G_i)= n-i-1.$ Now, if $g$ is constant,
then $|S^{\ast}|= (n-i-1)+(n-i-2)=2n-2i-3.$
\item If $i=\frac{n}{2}$ and $n$ is even. Then $G_i$ contains
no saturated vertex and $\frac{n}{2}$ twin set of vertices each of
cardinality two. Hence $fix(G_i)= \frac{n}{2}.$ Now, if $g$ is
constant, then $|S^{\ast}|= \frac{n}{2}+(\frac{n}{2}-1)=n-1.$
\item If $i=\lfloor \frac{n}{2}\rfloor$ and $n$ is odd. Then $G_i$ contains
one saturated vertex and $\lfloor\frac{n}{2}\rfloor$ twin set of
vertices each of cardinality two. Hence $fix(G_i)=
\lfloor\frac{n}{2}\rfloor.$ Suppose that, $u'= g(u_{i})$ for all
$u_i\in V(G_i)$ where $1\leq i\leq n$. Now, if $u'$ is a twin vertex
then $|S^{\ast}|=
\lfloor\frac{n}{2}\rfloor+(\lfloor\frac{n}{2}\rfloor-1)=2\lfloor
\frac{n}{2}\rfloor -1.$ However, if $u'$ ia saturated vertex, then
$|S^{\ast}|=
\lfloor\frac{n}{2}\rfloor+\lfloor\frac{n}{2}\rfloor=2\lfloor
\frac{n}{2}\rfloor.$
\end{enumerate}
\end{proof}


From Theorem \ref{f1122}, we can establishes the sharp bounds for
the fixing number of a functigraph of $G_i= G-ie^{\ast}$ in the
following corollary.
\begin{Corollary}
Let $G$ be a complete graph of order $n\geq 3$ and $G_i=
G-ie^{\ast}$ for all $i$ where $1\leq i\leq\lfloor
\frac{n}{2}\rfloor$ and $e^{\ast}$ joins two saturated vertices of
the graph $G$. If $g$ is a constant function, then $fix(G)\leq
fix(F_{G_i}) \leq fix (F_G).$ Both bounds are sharp.
\end{Corollary}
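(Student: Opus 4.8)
The plan is to reduce the statement to the three explicit values furnished by Theorem \ref{f1122}, once the two endpoints of the asserted chain have been pinned down. Since $G=K_n$, its whole vertex set is a single twin-set of order $n$, so Proposition \ref{rem3.3} gives $fix(G)=fix(K_n)=n-1$. For the right endpoint, a constant $g$ on $K_n$ is exactly the extremal configuration used for the upper bound in Proposition \ref{f5}: the $n$ vertices of $A$ form one twin-set (mutually adjacent and all joined to the common image) contributing $n-1$, and the $n-1$ vertices of $B$ other than the image form a twin-set contributing $n-2$, whence $fix(F_G)=(n-1)+(n-2)=2n-3$. It is convenient to note that this is the $i=0$ instance of the first line $2n-2i-3$ of Theorem \ref{f1122}.

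With the endpoints $n-1$ and $2n-3$ fixed, it remains to check that every value produced by Theorem \ref{f1122} lies in $[\,n-1,\,2n-3\,]$. For $1\le i\le\lfloor n/2\rfloor-1$ we have $fix(F_{G_i})=2n-2i-3$, which is strictly decreasing in $i$; its largest value $2n-5$ at $i=1$ is at most $2n-3$, and its smallest value in this range, attained at $i=\lfloor n/2\rfloor-1$, equals $n-1$ for $n$ even and $n$ for $n$ odd, both at least $n-1$. For the even boundary $i=n/2$, Theorem \ref{f1122} gives $fix(F_{G_i})=n-1=fix(G)$. Since all listed values are clearly at most $2n-3$, the upper inequality $fix(F_{G_i})\le fix(F_G)$ holds for every admissible $i$, and the lower inequality holds in all the even cases.

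The delicate step is the odd boundary $i=\lfloor n/2\rfloor$, where Theorem \ref{f1122} returns two possible values, $2\lfloor n/2\rfloor-1=n-2$ and $2\lfloor n/2\rfloor=n-1$, according to whether the constant image is a twin vertex or the unique saturated vertex of $G_i$. Only the second respects the lower bound $fix(G)=n-1$, so the clean way to close the argument is to take the constant function whose image is the saturated vertex; then $fix(F_{G_i})=n-1=fix(G)$ and the lower bound is attained. I expect this sub-case to be the main obstacle, since it is the one point where the choice of the constant function genuinely matters for the inequality. Finally, for sharpness: the lower bound is attained at $i=n/2$ (for $n$ even) and at $i=\lfloor n/2\rfloor$ with saturated image (for $n$ odd), while the upper bound is attained by the degenerate member $G_0=G$ of the family, where $F_{G_0}=F_G$ forces $fix(F_{G_0})=fix(F_G)=2n-3$. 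Combining these observations yields $fix(G)\le fix(F_{G_i})\le fix(F_G)$ with both bounds sharp.
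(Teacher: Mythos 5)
Your overall strategy coincides with the paper's: the paper offers no separate proof of this corollary, deriving it directly from Theorem \ref{f1122}, and your explicit case-check of the three values $2n-2i-3$, $n-1$, and $2\lfloor n/2\rfloor-1$ or $2\lfloor n/2\rfloor$ against the endpoints $fix(K_n)=n-1$ and $fix(F_{K_n})=2n-3$ is exactly that derivation carried out in detail. Your computations in the range $1\le i\le \lfloor n/2\rfloor-1$ and at $i=n/2$ ($n$ even) are correct.

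However, two points in your write-up are not mere conveniences but genuine discrepancies between the corollary as stated and what Theorem \ref{f1122} yields, and they should be flagged as such. First, in the odd boundary case $i=\lfloor n/2\rfloor$, Theorem \ref{f1122} explicitly allows the value $2\lfloor n/2\rfloor-1=n-2<n-1=fix(G)$ when the constant image is a twin vertex; your remedy of choosing the image to be the unique saturated vertex proves the chain of inequalities only for that particular constant function, whereas the corollary asserts it for an arbitrary constant $g$ (``if $g$ is a constant function''). So your argument establishes a weaker, existential statement; read universally, the corollary's lower bound is contradicted by the very theorem it is drawn from. Second, your sharpness argument for the upper bound invokes $G_0=G$, i.e., $i=0$, which lies outside the stated range $1\le i\le\lfloor n/2\rfloor$; within that range the largest attainable value is $2n-5<2n-3$, so the upper bound is never attained by any admissible $i$. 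Both issues originate in the statement rather than in your reasoning, but a complete proof must either restrict the choice of $g$ in the odd case and extend the family to $i=0$ (or reinterpret ``sharp''), or else weaken the asserted inequalities accordingly; as it stands, the step from your case analysis to the corollary's literal claim does not close.
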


Let $T$ be a tree graph and $v\in V(T)$. If $deg_{T}(v)=1$, then $v$
is called a \emph{pendant vertex}. A vertex $v\in T$ that adjacent
to a pendant vertex is called a \emph{support vertex}. We denote the
total number of pendant vertices in a tree $T$ by $p(T)$. We denote
the total number of support vertices in a tree $T$ by $s(T)$.



%

\begin{Proposition}\label{proppw11} If $T$ is a symmetric tree of order $n\geq
2$, then $fix(F_{T})\leq 2fix(T).$ This bound is sharp.
\end{Proposition}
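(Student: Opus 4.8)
The plan is to build an explicit fixing set of $F_{T}$ of size $2\,fix(T)$ out of a minimum fixing set of $T$. Let $W\subseteq V(T)$ be a minimum fixing set, so that $|W|=fix(T)\ge 1$ because $T$ is symmetric, and let $W_{A}\subseteq A$ and $W_{B}\subseteq B$ be the copies of $W$ in $G_{1}$ and $G_{2}$. I would set $S=W_{A}\cup W_{B}$, so $|S|=2\,fix(T)$, and prove that $S$ is a fixing set of $F_{T}$; this at once gives $fix(F_{T})\le |S|=2\,fix(T)$. Throughout I will use the degree bookkeeping in $F_{T}$: every $u\in A$ has $\deg_{F_{T}}(u)=\deg_{T}(u)+1\ge 2$ and exactly one neighbour in $B$, namely $g(u)$, whereas every $v\in B$ has $\deg_{F_{T}}(v)=\deg_{T}(v)+|g^{-1}(v)|$. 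In particular the leaves of $F_{T}$ are precisely the vertices $v\in B$ with $\deg_{T}(v)=1$ and $v\notin I$, so no vertex of $A$ is a leaf of $F_{T}$.

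The core step is to show that the only $\alpha\in\Gamma_{S}(F_{T})$ is the identity, and the heart of this is to prove that $\alpha$ cannot mix the two copies, i.e. $\alpha(A)=A$ and $\alpha(B)=B$. A full swap $\alpha(A)=B$ is excluded immediately, since it would send the fixed vertex $w_{A}\in W_{A}\subseteq A$ into $B$, contradicting $\alpha(w_{A})=w_{A}$. To rule out partial mixing I would exploit the intrinsic asymmetry recorded above: $\alpha$ must preserve the leaf set of $F_{T}$ (which lies entirely in $B$) and, more generally, the ``exactly one neighbour in the other copy'' pattern that characterises $A$-vertices; propagating these constraints along the spanning trees $G_{1}$ and $G_{2}$ outward from the simultaneously fixed vertices $w_{A}\in A$ and $w_{B}\in B$ should force $(\alpha(A),\alpha(B))=(A,B)$. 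Equivalently, one argues that $(A,B)$ is the \emph{unique} partition of $V(F_{T})$ into two induced copies of $T$ carrying the one-way cross-edge pattern and placing $W_{A}$ on one side and $W_{B}$ on the other; since $\alpha$ fixes $W_{A}$ and $W_{B}$ it carries $(A,B)$ to another such partition and hence to itself. Once the copies are preserved, $\alpha|_{A}$ is an automorphism of $G_{1}\cong T$ fixing $W_{A}\cong W$ pointwise, so $\alpha|_{A}=\mathrm{id}$ because $W$ fixes $T$; the same on $B$ gives $\alpha|_{B}=\mathrm{id}$, whence $\alpha=\mathrm{id}$. I expect this copy-preservation to be the main obstacle, precisely because automorphisms of a functigraph need not respect the bipartition $(A,B)$ in general (for instance $F_{P_{2}}$ is a $4$-cycle, whose rotations mix $A$ and $B$); the content of the argument is that fixing corresponding vertices in both copies destroys exactly these mixing automorphisms, and making the propagation rigorous is where the real work lies.

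For sharpness I would take the star $T=K_{1,n-1}$ with $n\ge 3$, so that its $n-1$ leaves form a twin-set and $fix(T)=n-2$, together with the constant function $g$ sending every vertex of $A$ to the centre $c'$ of $G_{2}$. Then each leaf of $G_{1}$ acquires neighbourhood $\{c,c'\}$, where $c$ is the centre of $G_{1}$, so these $n-1$ vertices form a twin-set of $F_{T}$; simultaneously the $n-1$ leaves of $G_{2}$ retain neighbourhood $\{c'\}$ and form a second twin-set. By Proposition \ref{rem3.3} each twin-set contributes at least $n-2$ vertices to every fixing set, giving $fix(F_{T})\ge 2(n-2)$, and one checks that these $2(n-2)$ vertices already fix $F_{T}$ (the two centres are distinguished by degree and the remaining vertices are then pinned down). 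Hence $fix(F_{T})=2(n-2)=2\,fix(T)$, the smallest instance being $T=P_{3}$ with $fix(F_{T})=2=2\,fix(T)$, which shows the bound is sharp.
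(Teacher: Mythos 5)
The paper states this proposition without any proof, so there is no argument of the authors to compare yours against; judged on its own merits, your attempt has a genuine gap at precisely the step you flagged as ``where the real work lies'', and that step is not merely unproven but false as stated. Your plan is to show that $S=W_{A}\cup W_{B}$ is a fixing set of $F_{T}$ for \emph{any} function $g$, where $W_{A},W_{B}$ are the two copies of one minimum fixing set $W$ of $T$. Here is a counterexample: take $T=P_{2}$ with vertices $1,2$, so $T$ is a symmetric tree with $fix(T)=1$ and $W=\{1\}$, and let $g$ be the bijection $g(u_{1})=v_{2}$, $g(u_{2})=v_{1}$. Then $F_{T}$ is the $4$-cycle $u_{1}-u_{2}-v_{1}-v_{2}-u_{1}$, in which $u_{1}$ and $v_{1}$ are antipodal. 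The reflection through the diagonal $u_{1},v_{1}$ fixes $u_{1}$ and $v_{1}$ and swaps $u_{2}\leftrightarrow v_{2}$; it is a nontrivial automorphism of $F_{T}$ fixing $S=W_{A}\cup W_{B}=\{u_{1},v_{1}\}$ pointwise. So $S$ is not a fixing set. Note that this automorphism mixes the copies only partially, so your exclusion of full swaps does not apply; and since $C_{4}$ is vertex-transitive, has no leaves, and every one of its vertices has exactly one neighbour in the other copy, none of the invariants you propose to propagate (leaf set, degrees, the ``one cross-neighbour'' pattern) carries any information. Whether $W_{A}\cup W_{B}$ fixes $F_{T}$ genuinely depends on how $g$ interacts with the symmetries of $T$, not only on $W$.

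The proposition itself survives this example, since $fix(C_{4})=2=2\,fix(P_{2})$, but only because one may choose a \emph{different} two-element fixing set, for instance the adjacent pair $\{u_{1},u_{2}\}$. This shows what any correct proof must do and yours does not: the fixing set of $F_{T}$ has to be chosen adaptively, with reference to $g$, rather than by transplanting a single fixed set $W$ into both copies. Even for trees of order $n\geq 3$, where I do not know a counterexample to your specific claim, your argument for copy-preservation is only a heuristic (``should force''), so nothing is established there either. By contrast, your sharpness construction is correct: for $T=K_{1,n-1}$ (in particular $P_{3}$) with $g$ constant equal to the centre of $G_{2}$, the two twin-sets of size $n-1$ force $fix(F_{T})\geq 2(n-2)$ by Proposition \ref{rem3.3}, and $2(n-2)$ of those twin vertices do fix $F_{T}$ because the four remaining vertices have pairwise distinct degrees; hence $fix(F_{T})=2(n-2)=2\,fix(T)$. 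So the sharpness half stands, but the upper bound, which is the substance of the proposition, remains unproven.
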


\begin{Corollary}If $P_{n}$ is a path of order $n\geq
2$, then $fix(F_{P_{n}})\leq 2.$ This bound is sharp.
\end{Corollary}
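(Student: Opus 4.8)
The plan is to obtain the inequality as an immediate application of Proposition \ref{proppw11} and to certify sharpness with a single explicit functigraph.

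First I would compute $fix(P_n)$. Writing $P_n : v_1 v_2 \cdots v_n$, for $n \geq 2$ the automorphism group is generated by the reflection $\sigma : v_i \mapsto v_{n+1-i}$ and hence has order two, so $P_n$ is a symmetric tree. Since $\sigma$ displaces the endpoint $v_1$, the stabilizer $\Gamma_{\{v_1\}}(P_n)$ is trivial, giving $fix(P_n) = 1$. Applying Proposition \ref{proppw11} to the symmetric tree $P_n$ then yields at once $fix(F_{P_n}) \leq 2\,fix(P_n) = 2$, which is the asserted bound.

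For sharpness I would exhibit one function $g$ for which equality holds, and the smallest case already suffices. Take $n = 2$, label the vertices of $B$ as $v_1', v_2'$, and let $g$ be the parallel map $g(v_1) = v_1'$, $g(v_2) = v_2'$. The functigraph $F_{P_2}$ then has edge set $\{v_1 v_2,\ v_1' v_2',\ v_1 v_1',\ v_2 v_2'\}$, i.e. it is the $4$-cycle $v_1 v_2 v_2' v_1'$. I would finish by recording that $fix(C_4) = 2$: the cycle is vertex-transitive, so no single vertex can fix it (fixing one vertex leaves intact the reflection through the two diagonal vertices), whereas fixing two adjacent vertices destroys every automorphism. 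Thus $fix(F_{P_2}) = 2$ and the bound is attained.

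The one delicate point is the sharpness step rather than the bound itself, because for most functions the functigraph of a path has fixing number $0$ or $1$; for instance the identity map on $P_3$ produces the $2 \times 3$ grid graph, which a single corner vertex already fixes. Since the statement only requires one witnessing pair $(P_n, g)$, the symmetric choice giving $C_4$ resolves this with no computation beyond the elementary automorphism count above, and I would present it as the sharpness example.
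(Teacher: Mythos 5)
Your proposal is correct and follows the paper's intended route: the corollary is an immediate application of Proposition \ref{proppw11} together with $fix(P_n)=1$, which is exactly your first step. Your sharpness witness $F_{P_2}\cong C_4$ with $fix(C_4)=2$ is also the example implicit in the paper (the proof of Theorem \ref{thw11m} notes that $F_{P_2}$ is either $C_4$ or $K_3$ with a pendant vertex), so nothing further is needed.
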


\begin{Theorem}\label{thw11m} Let $T$ be a symmetric tree and $F_{T}$ be its symmetric functigraph, then
 $fix(F_{T})= 2|T|-t,\,\ 2\leq t\leq 3$ if and only if $T=P_{2}.$
\end{Theorem}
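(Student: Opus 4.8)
The plan is to prove both directions by comparing $fix(F_T)$ against the extreme value $2|T|-3$, using the bound $fix(F_T)\le 2\,fix(T)$ supplied by Proposition \ref{proppw11} together with the general fact that $fix(G)\le n-1$, with equality only for complete graphs.

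For the forward (``only if'') direction, suppose $fix(F_T)=2|T|-t$ with $2\le t\le 3$; writing $n=|T|$, this says $fix(F_T)\ge 2n-3$. Applying Proposition \ref{proppw11} gives $2\,fix(T)\ge fix(F_T)\ge 2n-3$, hence $fix(T)\ge n-\tfrac32$, and since $fix(T)$ is an integer bounded above by $n-1$ we conclude $fix(T)=n-1$. The crucial step is then to identify which trees attain the maximum fixing number $n-1$. I would argue that $fix(G)=n-1$ for a connected graph forces every pair of vertices to be twins: if two vertices $u,v$ were omitted from a fixing set of size $n-2$, the only possible nontrivial automorphism fixing all remaining vertices is the transposition $(uv)$, and $(uv)$ is an automorphism precisely when $u,v$ are twins (cf.\ Proposition \ref{rem3.3}). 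A connected graph in which every pair of vertices is a twin pair must be $K_n$. Since a tree equals $K_n$ only for $n\le 2$, the equality $fix(T)=n-1$ forces $n=2$, i.e.\ $T=P_2$.

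For the reverse (``if'') direction I would verify the claim for $T=P_2$ by inspecting the finitely many functions $g:V(G_1)\to V(G_2)$. Here $n=2$, so $2n-3=1$ and $2n-2=2$, and up to symmetry there are two cases. When $g$ is constant, $F_{P_2}$ is the graph on $\{u_1,u_2,v_1,v_2\}$ in which $u_1,u_2$ are twins while the other two vertices are rigidly distinguished by their degrees, giving $fix(F_{P_2})=1=2|T|-3$. When $g$ is a bijection, $F_{P_2}\cong C_4$, giving $fix(F_{P_2})=2=2|T|-2$. In every case $fix(F_{P_2})=2|T|-t$ with $t\in\{2,3\}$, which completes the equivalence.

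The main obstacle I anticipate is the characterization of the extremal trees, namely the implication $fix(T)=n-1\Rightarrow T=K_n\Rightarrow T=P_2$; the remainder is a short inequality chain from Proposition \ref{proppw11} and a finite check. A convenient shortcut, if one prefers to bypass the twin-pair characterization, is to observe instead that any tree of order $n\ge 3$ satisfies $fix(T)\le n-2$ (it is not complete), whence $fix(F_T)\le 2(n-2)=2n-4<2n-3$; this excludes all trees of order at least $3$ at once and isolates $P_2$.
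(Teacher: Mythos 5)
Your proof is correct, and the converse (``only if'') direction takes a genuinely different route from the paper's. The paper argues structurally on the tree: it splits into cases according to the number $s(T)$ of support vertices, bounds $fix(T)\leq |T|-|X_{4}|-1$ (where $X_{4}$ is the set of support and internal vertices) when $s(T)\geq 3$, identifies $T$ as a path when $s(T)=p(T)=2$ and as a star when $s(T)=1$, and in each non-$P_2$ case contradicts $fix(F_{T})\geq 2|T|-3$ via Proposition \ref{proppw11}. You also start from Proposition \ref{proppw11}, but then replace the whole case analysis by an extremal argument: $2\,fix(T)\geq fix(F_{T})\geq 2n-3$ forces $fix(T)=n-1$; the equality $fix(T)=n-1$ forces every pair of vertices to be twins, hence (using connectivity) $T$ is complete, hence $n=2$. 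This is shorter, needs no structural facts about trees, and establishes along the way the clean general statement that a connected non-complete graph satisfies $fix\leq n-2$; what the paper's support-vertex analysis buys instead is reusability, since the same case framework drives the Corollary that follows (the cases $t=4,\dots,8$), where your inequality would no longer pin $fix(T)$ down to a single value. Two minor points: your parenthetical appeal to Proposition \ref{rem3.3} is a mis-citation --- that proposition concerns how many vertices of a twin-set a fixing set must contain, whereas the fact you actually need (the transposition $(uv)$ is an automorphism exactly when $u,v$ are twins) is the standard one your inline argument proves correctly, so nothing is lost; and your closing ``shortcut'' is not really independent of the twin-pair characterization, since ``$T$ not complete implies $fix(T)\leq n-2$'' is that characterization in contrapositive form. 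Your ``if'' direction (inspecting $C_{4}$ and $K_{3}$ with a pendant vertex) coincides with the paper's.
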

\begin{proof}If $T=P_{2}$, then $F_{T}$ is either $C_{4}$ or $K_{3}$ with a pendant vertex. Hence, $fix(F_{T})=
2|T|-t,\,\ 2\leq t\leq 3$. Conversely, suppose that, $fix(F_{T})=
2|T|-t,\,\ 2\leq t\leq 3$. We discuss the following cases for
$s(T).$
\begin{enumerate}
\item If $s(T)\geq 3$. We partition $V(T)$ into the sets $X_{1}$, $X_{2}$ and
$X_{3}$ where $X_{1}= \{u\in V(T)$ : $u$ is a pendant vertex of
$T\},$ $X_{2}= \{u\in V(T)$ : $u$ is a support vertex of $T\}$ and
$X_{3}= V(T)\setminus \{X_{1}\cup X_{2}\}.$ Let $X_{4}= X_{2}\cup
X_{3},$ then $fix(T)\leq |T|-|X_{4}|-1$ and from Proposition
\ref{proppw11}, $fix(F_{T})\leq 2[|T|-|X_{4}|-1]$ which leads to a
contradiction as $|X_{4}|\geq 3$.
\item If $s(T)=2$ then, we have the following
two subcases:
\begin{enumerate}
\item If $s(T)=p(T)$, then $T= P_{n\geq 2} (n\neq 3)$ and hence
$fix(F_{P_{n}})\leq 2$. Thus, by hypothesis, $T=P_{2}.$

\item If $s(T)\neq p(T)$, then either $p(T)=1$ or $p(T)>
2$. However, $p(T)=1$ and $s(T)=2$ is impossible in a tree, so
$p(T)> 2$. This also leads to a contradiction as in Case (1).
\end{enumerate}
\item If $s(T)=1$, then $T= K_{1,n}, n\geq 2$ and hence $fix(F_{T})\leq
2[|T|-2]$ which is again a contradiction.
\end{enumerate}
\end{proof}
The following corollary can be proved by using similar arguments as
in proof of Theorem \ref{thw11m}.
\begin{Corollary} Let $T$ be a symmetric tree and $F_{T}$ be its symmetric
functigraph. (1) If $fix(F_{T})= 2|T|-t,\,\ 4\leq t\leq 5$, then $T=K_{1,n}, n\geq 2.$\\
(2) If $fix(F_{T})= 2|T|-6$, then $T=K_{1,n}, n\geq 3.$\\
(3) If $fix(F_{T})= 2|T|-7$, then $T\in \{P_{4}, K_{1,n}, n\geq 3\}.$\\
(4) If $fix(F_{T})= 2|T|-8$, then $T\in \{P_{5},\ K_{1,n},\ n\geq 4,
(K_{1,n_{1}},K_{1,n_{2}}) +e, n_{1}, n_{2}\geq 2, K_{1,n}\ \mbox{and
a vertex adjacent with one pendant vertex of}\,\ K_{1,n}\}.$
\end{Corollary}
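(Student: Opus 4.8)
The plan is to run the same machinery as in the proof of Theorem \ref{thw11m}. The engine is the chain
$$fix(F_{T})\le 2\,fix(T)\le 2\bigl(|T|-|X_{4}|-1\bigr),$$
obtained by feeding the estimate $fix(T)\le |T|-|X_{4}|-1$ from that proof (with $X_{4}=X_{2}\cup X_{3}$ the set of non-pendant vertices) into Proposition \ref{proppw11}. Substituting $fix(F_{T})=2|T|-t$ gives $|X_{4}|\le (t-2)/2$, so the number of non-pendant vertices is at most $1$ for $t\in\{4,5\}$, at most $2$ for $t\in\{6,7\}$, and at most $3$ for $t=8$. Because the non-pendant vertices induce a subtree of $T$ on $|X_{4}|$ vertices, that subtree is a single vertex, an edge, or $P_{3}$, which confines $T$ to the stars $K_{1,n}$, to the double stars $(K_{1,n_{1}},K_{1,n_{2}})+e$ (the edge case, with $P_{4}$ arising as $n_{1}=n_{2}=1$), and to the caterpillars whose non-pendant vertices induce $P_{3}$ (with $P_{5}$ the smallest). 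As in the theorem, one may equally organize the split through $s(T)$, which is bounded by $|X_{4}|$.

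For each admissible shape I would then compute $fix(T)$ exactly and compare it with $|T|-|X_{4}|-1=p(T)-1$, since the displayed bound can be met only when $fix(T)=p(T)-1$ and $fix(F_{T})=2\,fix(T)$ hold simultaneously. Applying Propositions \ref{fazf} and \ref{rem3.3} to the twin classes of leaves shows that every double star with $\max\{n_{1},n_{2}\}\ge 2$ has $fix(T)=p(T)-2$, that the only double star reaching $p(T)-1$ is $P_{4}$, and that among the caterpillars with spine $P_{3}$ only $P_{5}$ reaches $p(T)-1$. A tree with $fix(T)=p(T)-2$ can supply at most $fix(F_{T})=2|T|-2|X_{4}|-4$, and this deficit of $2$ is exactly what shifts such a tree from one value of $t$ to the next: the double stars with $\max\{n_{1},n_{2}\}\ge 2$---including the $n_{2}=1$ case, namely a star with a pendant vertex attached to one of its leaves---land at $t=8$ rather than at $t=6$ or $7$. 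Tracking this bookkeeping case by case reproduces the four lists, with the lower thresholds $n\ge 2,2,3,3,4$ for $t=4,5,6,7,8$ forced by the requirement $fix(F_{T})=2n+2-t\ge 1$ (as $F_{T}$ is symmetric).

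The genuinely delicate step, and the one I expect to be the main obstacle, is the achievability analysis: the chain above bounds $fix(F_{T})$ only from above, whereas for each surviving candidate I must decide, by inspecting $\Gamma(F_{T})$ for explicit choices of $g$, whether the target value $2|T|-t$ is in fact attained. The sharpest demand falls on the trees that straddle two cases. The corollary bounding $fix(F_{P_{n}})$ by $2$ already gives $fix(F_{P_{4}})\le 2$, but the correctness of part~(2) rests on the strictly stronger claim that $fix(F_{P_{4}})$ never equals $2|T|-6=2$; one has to verify directly that $fix(F_{P_{4}})\le 1$ for every $g$, so that $P_{4}$ surfaces only at $t=7$, where $fix(F_{P_{4}})=1$ is realizable. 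Because the counting that controls the upper bound says nothing about which values strictly below $2\,fix(T)$ are hit by some $g$, this functigraph-by-functigraph attainability check---for the stars as well as for the extremal double stars and for $P_{5}$---is where the real effort lies.
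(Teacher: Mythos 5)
Your reconstruction of the intended machinery is exactly the paper's: the paper's entire proof of this corollary is the remark that it follows ``by similar arguments as in proof of Theorem \ref{thw11m}'', and your chain $fix(F_{T})\le 2\,fix(T)\le 2(|T|-|X_{4}|-1)$ together with the enumeration of trees with $|X_{4}|\le 1,2,3$ is precisely that argument. But as it stands your proposal is not a proof: every step that actually separates the four cases --- the exact computation of $fix(T)$ for the candidates, and above all the exclusion of $P_{4}$ from part (2) --- is deferred (``I would then compute\dots'', ``one has to verify directly\dots'', ``this is where the real effort lies''). The one claim you correctly isolate as the crux, namely that $fix(F_{P_{4}})\neq 2$ for every $g$, is stated but never established.

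That crux cannot be established, because it is false, and this is a fatal gap rather than missing bookkeeping. Take $T=P_{4}$ with $A:u_{1}u_{2}u_{3}u_{4}$, $B:v_{1}v_{2}v_{3}v_{4}$ and $g(u_{1})=v_{2}$, $g(u_{2})=v_{1}$, $g(u_{3})=v_{3}$, $g(u_{4})=v_{4}$. Then $N(u_{1})=\{u_{2},v_{2}\}=N(v_{1})$, so $u_{1},v_{1}$ are twins, and the map fixing $u_{1},v_{1}$ and swapping $u_{i}\leftrightarrow v_{i}$ for $i=2,3,4$ is a nontrivial automorphism fixing $u_{1}$; hence no single vertex fixes $F_{T}$, while one checks directly that $\{u_{1},u_{3}\}$ does. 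Thus $fix(F_{P_{4}})=2=2|T|-6$, contradicting part (2). Worse, with $g(u_{1})=v_{2}$, $g(u_{2})=v_{1}$, $g(u_{3})=v_{4}$, $g(u_{4})=v_{3}$ one gets two disjoint twin pairs $\{u_{1},v_{1}\}$, $\{u_{4},v_{4}\}$ and the additional automorphism $(u_{2}v_{2})(u_{3}v_{3})$ fixing all four of these vertices; by Proposition \ref{rem3.3} every fixing set meets both twin pairs, yet no such pair of vertices suffices, so $fix(F_{P_{4}})=3$. Since $2\,fix(P_{4})=2$, this refutes Proposition \ref{proppw11} itself --- the engine of your argument and of the paper's --- and, as $3=2|T|-5$, it also refutes part (1) with $t=5$. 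So the route through Proposition \ref{proppw11} is unsound and the corollary as stated is false; no completion of the deferred case analysis can repair the proposal.
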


Suppose that $G_{1}=(V_{1},E_{1})$ and $G_{2}=(V_{2},E_{2})$ be two
graphs with disjoint vertex sets $V_{1}$ and $V_{2}$ and disjoint
edge sets $E_{1}$ and $E_{2}$. The \emph{union} of $G_{1}$ and
$G_{2}$ is the graph $G_{1}\cup G_{2}=(V_{1}\cup V_{2}, E_{1}\cup
E_{2})$. The \emph{join} of $G_{1}$ and $G_{2}$ is the graph
$G_{1}+G_{2}$ that consists of $G_{1}\cup G_{2}$ and all edges
joining all vertices of $V_{1}$ with all vertices of $V_{2}$.

\begin{Theorem} \label{fazil123} \cite{fazil} Let $G_{1}$ and $G_{2}$ be two connected
graphs, then $fix(G_{1}+G_{2})\geq fix(G_{1})+fix(G_{2}).$ This
bound is sharp.
\end{Theorem}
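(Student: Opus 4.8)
The plan is to prove the inequality by embedding the direct product $\Gamma(G_1)\times\Gamma(G_2)$ into $\Gamma(G_1+G_2)$ and then pushing any fixing set of the join down onto the two factors. Write $V_1=V(G_1)$ and $V_2=V(G_2)$. The key structural fact I would establish first is an \emph{extension lemma}: given any $\alpha_1\in\Gamma(G_1)$, the map $\widehat{\alpha_1}$ defined by $\widehat{\alpha_1}|_{V_1}=\alpha_1$ and $\widehat{\alpha_1}|_{V_2}=\mathrm{id}$ is an automorphism of $G_1+G_2$. Indeed, edges inside $G_1$ are preserved because $\alpha_1\in\Gamma(G_1)$, edges inside $G_2$ are preserved trivially, and since $\widehat{\alpha_1}$ maps $V_1$ to $V_1$ and $V_2$ to $V_2$ setwise while every $V_1$--$V_2$ pair is an edge of $G_1+G_2$, all join edges are preserved as well. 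Moreover $\widehat{\alpha_1}$ is nontrivial precisely when $\alpha_1$ is. Symmetrically, each $\alpha_2\in\Gamma(G_2)$ extends to $\widehat{\alpha_2}$ acting as the identity on $V_1$.

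For the inequality, let $S$ be any fixing set of $G_1+G_2$ and put $S_1=S\cap V_1$, $S_2=S\cap V_2$. I would first claim that $S_1$ is a fixing set of $G_1$. If not, there is a nontrivial $\alpha_1\in\Gamma_{S_1}(G_1)$; its extension $\widehat{\alpha_1}$ is a nontrivial automorphism of $G_1+G_2$ that fixes every vertex of $S_1$ (because $\alpha_1$ does) and every vertex of $S_2\subseteq V_2$ (because $\widehat{\alpha_1}$ is the identity on $V_2$), hence fixes all of $S=S_1\cup S_2$. This contradicts $\Gamma_S(G_1+G_2)$ being trivial, so $S_1$ fixes $G_1$ and $|S_1|\geq fix(G_1)$. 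By the symmetric argument $S_2$ fixes $G_2$ and $|S_2|\geq fix(G_2)$. Since $S_1$ and $S_2$ are disjoint with union $S$,
$$|S|=|S_1|+|S_2|\geq fix(G_1)+fix(G_2).$$
Taking $S$ to be a minimum fixing set of $G_1+G_2$ yields $fix(G_1+G_2)\geq fix(G_1)+fix(G_2)$.

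For sharpness I would exhibit a pair for which the join introduces no new symmetry, so that the embedding above is onto and equality is forced. The natural way to guarantee $\Gamma(G_1+G_2)=\Gamma(G_1)\times\Gamma(G_2)$ is through the complement: since $\Gamma(G)=\Gamma(\overline{G})$ and $\overline{G_1+G_2}=\overline{G_1}\cup\overline{G_2}$, it suffices that $\overline{G_1}$ and $\overline{G_2}$ be connected and non-isomorphic, for then no automorphism of the disjoint union can swap or mix the two components. Under this hypothesis a set fixes $G_1+G_2$ if and only if its trace on each $V_i$ fixes $G_i$, so $fix(G_1+G_2)=fix(G_1)+fix(G_2)$. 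A concrete instance is $G_1=P_4$ and $G_2=C_5$ (both self-complementary, hence with connected non-isomorphic complements), giving $fix(G_1+G_2)=fix(P_4)+fix(C_5)=1+2=3$.

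The routine half of the argument is the inequality itself, which follows immediately once the extension lemma is in place. The genuine difficulty lies entirely on the sharpness side: to certify equality one must rule out \emph{all} automorphisms of $G_1+G_2$ that do not respect the partition $\{V_1,V_2\}$, and such mixing automorphisms can appear unexpectedly because a join admits many decompositions (for example $K_m+K_n=K_{m+n}$). Controlling the full group $\Gamma(G_1+G_2)$, rather than just the obvious subgroup $\Gamma(G_1)\times\Gamma(G_2)$, via the complement and connectedness condition is therefore the crux of producing a valid sharp example.
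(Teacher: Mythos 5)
Your proposal is correct, but there is nothing in the paper to compare it against: the paper states this theorem with a citation to \cite{fazil} (a submitted manuscript, ``On the Fixatic number of Graphs'') and gives no proof of it at all. So your argument has to stand on its own, and it does. The inequality half is airtight: the extension lemma (every $\alpha_1\in\Gamma(G_1)$ extends to an automorphism of $G_1+G_2$ acting as the identity on $V_2$, since all $V_1$--$V_2$ pairs are join edges) immediately forces the trace $S\cap V_1$ of any fixing set $S$ of the join to be a fixing set of $G_1$, and symmetrically for $G_2$, giving $|S|\geq fix(G_1)+fix(G_2)$. The sharpness half is also handled correctly, and you rightly identify it as the delicate part: one must control the \emph{full} group $\Gamma(G_1+G_2)$, not just the subgroup $\Gamma(G_1)\times\Gamma(G_2)$. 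Your complement trick does this cleanly: $\overline{G_1+G_2}=\overline{G_1}\cup\overline{G_2}$, and when $\overline{G_1}$, $\overline{G_2}$ are connected and non-isomorphic, no automorphism of the disjoint union can mix the components, so $\Gamma(G_1+G_2)=\Gamma(G_1)\times\Gamma(G_2)$ and a set fixes the join exactly when its traces fix the factors. The concrete instance $G_1=P_4$, $G_2=C_5$ checks out: both are self-complementary (hence have connected complements), they are non-isomorphic, $fix(P_4)=1$, $fix(C_5)=2$, and so $fix(P_4+C_5)=3$, attaining the bound. This is a complete and self-contained proof of a statement the paper only imports; it also isolates a reusable sufficient condition (connected, non-isomorphic complements) under which the join inequality is an equality, which is more information than the bare sharpness claim in the statement.
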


\begin{Proposition}\label{fazil22}  Let $G_{1}$ and $G_{2}$ be two connected
graphs and $g: V(G_{1}+G_{2})\rightarrow V(G_{1}+G_{2})$ be a
constant function, then $$fix(F_{G_{1}+G_{2}})=
2fix(G_{1}+G_{2})-i,\,\ 0\leq i\leq 1.$$
\end{Proposition}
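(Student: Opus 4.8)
The plan is to determine the full automorphism group of $F_{G_{1}+G_{2}}$ under a constant $g$ and read the fixing number off a product decomposition. Write $G=G_{1}+G_{2}$, let $n=|V(G)|$, let $A$ and $B$ be the vertex sets of the two copies of $G$ (so $\langle A\rangle\cong\langle B\rangle\cong G$), and let $w\in B$ be the unique vertex with $I=g(A)=\{w\}$. Since $g$ is constant, the only edges between $A$ and $B$ are $\{uw:u\in A\}$. First I would pin down $w$: its degree in $F_{G}$ is $deg_{G}(w)+n\ge n+1$ (using $deg_{G}(w)\ge 1$ as $G$ is connected), while every $u\in A$ has degree $deg_{G}(u)+1\le n$ and every $b\in B\setminus\{w\}$ has degree $deg_{G}(b)\le n-1$. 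Thus $w$ is the unique vertex of maximum degree, and so is fixed by every $\alpha\in\Gamma(F_{G})$.

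Next I would show that every automorphism preserves the two copies. Deleting $w$ leaves $\langle A\rangle$ (connected, of order $n$) as one component and the components of $\langle B\rangle-w$ (each of order $\le n-1$) as the others; hence $A$ is the unique component of $F_{G}-w$ of order $n$. As every $\alpha\in\Gamma(F_{G})$ fixes $w$, it induces an automorphism of $F_{G}-w$ that preserves component sizes, forcing $\alpha(A)=A$ and $\alpha(B)=B$; consequently $\alpha|_{A}\in\Gamma(\langle A\rangle)$ and $\alpha|_{B}\in\Gamma_{w}(\langle B\rangle)$. Conversely, given any $\sigma\in\Gamma(\langle A\rangle)$ and $\tau\in\Gamma_{w}(\langle B\rangle)$, the map agreeing with $\sigma$ on $A$ and with $\tau$ on $B$ preserves every cross edge $uw$ (since $\sigma(u)\in A$ and $\tau(w)=w$) and every cross non-edge $ub$ with $b\ne w$ (since $\tau(b)\ne w$), so it lies in $\Gamma(F_{G})$. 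This yields the product description
\[
\Gamma(F_{G})\;\cong\;\Gamma(\langle A\rangle)\times\Gamma_{w}(\langle B\rangle).
\]

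From here the fixing number follows by counting. A set $S$ fixes $F_{G}$ exactly when $\Gamma_{S\cap A}(\langle A\rangle)\times\big(\Gamma_{w}(\langle B\rangle)\cap\Gamma_{S\cap B}(\langle B\rangle)\big)$ is trivial, i.e. when $S\cap A$ is a fixing set of $G$ and $\{w\}\cup(S\cap B)$ is a fixing set of $G$. These two conditions constrain disjoint parts of $S$, so minimizing independently gives $fix(F_{G})=fix(G)+m_{w}$, where $m_{w}$ is the least size of a set $T\subseteq V(G)$ with $\{w\}\cup T$ a fixing set of $G$. Finally I would bound $m_{w}$: taking a minimum fixing set $S_{0}$ of $G$, the superset $\{w\}\cup S_{0}$ is still fixing, so $m_{w}\le fix(G)$; and since $\{w\}\cup T$ has size at most $|T|+1$ while every fixing set has size at least $fix(G)$, we get $m_{w}\ge fix(G)-1$. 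Writing $m_{w}=fix(G)-i$ with $i\in\{0,1\}$ gives $fix(F_{G})=2fix(G)-i$.

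The main obstacle is the second paragraph: everything rests on the rigidity of the decomposition, namely that no automorphism mixes $A$ with $B$. This is precisely what the unique-maximum-degree observation for $w$ and the unique-largest-component observation for $F_{G}-w$ secure; once $\alpha(A)=A$ is in hand, the product structure and the independent minimization are routine. I note that the join hypothesis enters only through $deg_{G}(w)\ge 1$ and connectedness, so the same argument in fact applies to any connected $G$ with a constant $g$.
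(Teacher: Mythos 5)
Your proof is correct, and it rests on the same core observation as the paper's: under a constant $g$ the image vertex $w$ (the paper calls it $u'$) is fixed by every automorphism of $F_{G_{1}+G_{2}}$, so a minimum fixing set splits into a part fixing copy $A$ and a part which, together with $w$, fixes copy $B$. But your execution is genuinely different and substantially more rigorous. The paper merely asserts this splitting: it exhibits $S^{\ast}$ as $S_{1}\cup S_{2}$ or $S_{1}\cup (S_{2}\setminus\{u'\})$ after a case analysis on whether $G_{1}$ has a saturated vertex --- a distinction that does not change the final dichotomy, and which invokes Theorem \ref{fazil123} for equalities such as $fix(A)=fix(G_{1})+fix(G_{2})$ that the cited theorem (an inequality) does not supply --- and it never proves that automorphisms of the functigraph preserve the two copies. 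You prove exactly that missing step: the unique-maximum-degree argument pins $w$, the unique-largest-component argument for $F_{G}-w$ pins $A$, and together these give the decomposition $\Gamma(F_{G})\cong\Gamma(G)\times\Gamma_{w}(G)$, from which $fix(F_{G})=fix(G)+m_{w}$ with $fix(G)-1\le m_{w}\le fix(G)$ follows by a clean, case-free minimization. What your route buys: rigor where the paper is assertive, elimination of the saturated-vertex cases, and --- as you note --- the fact that the join hypothesis is never used beyond connectedness, so the statement actually holds for every connected graph with constant $g$. What the paper's route buys is only brevity.
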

\begin{proof} Let $A$ and $B$ be two copies of $G_{1}+G_{2}$. Let $S_{1}$ and
$S_{2}$ be minimum fixing sets of $A$ and $B$, respectively. Define
$g:A\rightarrow B$ by $u'= g(u)$, for all $u\in A$. We discuss the
following two cases.

\begin{enumerate}
\item If $G_{1}$ does not contain any saturated vertex,
then $G_{1}+G_{2}$ also does not contain any saturated vertex. In
this case $fix(A)= fix(B)= fix(G_{1})+fix(G_{2})$ by Theorem
\ref{fazil123}. Now, if $u'\in S_{2},$ then $S^{\ast}= S_{1}\cup
\{S_{2}\setminus \{u'\}\}$ because $\theta(u')= \{u'\}$ in
$F_{G_{1}+G_{2}}.$ If $u'\notin S_{2}$, then $S^{\ast}= S_{1}\cup
S_{2}.$
\item If $G_{1}$ contains any saturated vertex, then both $A$ and
$B$ have saturated vertices. In this case $fix(A) = fix(B)>
fix(G_{1})+fix(G_{2})$ by Theorem \ref{fazil123}. Now, if $u'$ is a
saturated vertex of $B$, then $u'\in S_{2}$ and hence $S^{\ast}=
S_{1}\cup \{S_{2}\setminus \{u'\}\}.$ If $u'$ is not a saturated
vertex, then either $u'\in S_{2}$ or $u'\notin S_{2}.$ If $u'\in
S_{2}$, then $S^{\ast}= S_{1}\cup \{S_{2}\setminus \{u'\}\}$ and if
$u'\notin S_{2},$ then $S^{\ast}= S_{1}\cup S_{2}.$
\end{enumerate}

\end{proof}

\end{document}